\newtheorem{theorem}{Theorem}[section]
\newtheorem*{theorem*}{Theorem}
\newtheorem{lemma}[theorem]{Lemma}
\newtheorem*{lemma*}{Lemma}
\newtheorem{claim}[theorem]{Claim}
\newtheorem{proposition}[theorem]{Proposition}
\newtheorem{conjecture}[theorem]{Conjecture}
\theoremstyle{definition}
\newtheorem{definition}[theorem]{Definition}
\newtheorem{question}[theorem]{Question}
\newtheorem{remark}[theorem]{Remark}
\newcommand{\al}{{\alpha}}
\newcommand{\eps}{{\varepsilon}}
\newcommand{\de}{{\delta}}
\newcommand{\ga}{{\gamma}}
\newcommand{\Ga}{{\Gamma}}
\renewcommand{\phi}{{\varphi}}
\renewcommand{\ge}{\geqslant}
\renewcommand{\le}{\leqslant}
\newcommand\RR{\mathbb R}
\newcommand\NN{\mathbb N}
\newcommand{\KK}{\mathbb K}
\newcommand{\MM}{\mathbb M}
\newcommand{\UU}{\mathbb U}
\renewcommand{\cal}[1]{{\mathcal #1}}
\newcommand{\case}[2][]{%
    \iftoggle{no_cases}{%
        \left\{\begin{array}{ll}#2 & #1
    }{%
        \\#2 & #1
    }%
    \togglefalse{no_cases}
}
\newcommand{\esac}{%
    \end{array}\right.
    \toggletrue{no_cases}
}
\renewcommand{\part}[1][(a)]{%
    \ifthenelse{\boolean{theno_parts}}{%
        \begin{enumerate}[#1]%
            \item
    }{%
        \item
    }
    \setboolean{theno_parts}{false}%
}
\newcommand{\trap}{%
    \ifthenelse{\boolean{theno_parts}}{}{%
        \end{enumerate}
    }
    \setboolean{theno_parts}{true}
}
\newcommand{\newop}[2]{%
    \expandafter\def\csname #1\endcsname{\operatorname{#2}}
}
\title{On random compact sets, equidecomposition, and domains of expansion in $\RR^3$}
\author{Tomasz Cieśla\thanks{T.C.~was supported by the ERC Starting Grant ``Limits of Structures in Algebra and Combinatorics'' No. 805495}, Łukasz Grabowski\thanks{Ł.G.~was partially supported by the ERC Starting Grant ``Limits of Structures in Algebra and Combinatorics'' No. 805495}}
\date{}
\begin{document}
\maketitle
\abstract{
We study random compact subsets of $\RR^3$ which can be described as  "random Menger sponges".  We use those random sets to construct a pair of compact sets $A$ and $B$ in $\RR^3$ which are of the same positive measure, such that $A$ can be covered by finitely many translates of $B$, $B$ can be covered by finitely many translates of $A$, and yet $A$ and $B$ are not equidecomposable.  Furthermore, we construct the first example of a compact subset of $\RR^3$ of positive measure which is not a domain of expansion. This answers a question of Adrian Ioana.}

%~ \setlength{\parindent}{2em}
\setlength{\parskip}{0.2em}

\newop{logrange}{logrange}
\newop{range}{range}
\tableofcontents
\section{Introduction}

We recall that two sets $A,B\subset \RR^n$ are \emph{equidecomposable} if we can write $A= \bigsqcup_{i=1}^k A_i$, $B= \bigsqcup_{i=1}^k B_i$ and $B_i = \ga_i(A_i)$ for some sets $A_i,B_i\subset \RR^n$ and some isometries $\ga_i\in \Iso(\RR^n)$. 

Since the work of Banach and Tarski~\cite{BanachTarski} on equidecompositions of sets in $\RR^3$, there has been a very considerable amount of interest in various equidecomposition problems both in Euclidean spaces and in other spaces equipped with group actions. Relatively recent developments include Baire improvements of the Banach-Tarski theorem in~\cite{MR1227475},  measurable and Borel improvements (resp.~\cite{MR3612006} and \cite{MR3702673}) of the seminal work of Laczkovich~\cite{MR1037431} about equidecompositions of sets in $\RR^n$, and a substantial progress on the Gardner conjecture (\cite{ciesla2019measurable} and~\cite{kun2021gardners}). For a more complete discussion we refer the reader to the monograph~\cite{MR3616119}.

In this article we are motivated by the following general question.

\begin{question}\label{intro-q1} Given a compact set $A\subset \RR^3$ of positive measure, can we describe all other compact sets $B\subset \RR^3$ which are equidecomposable with $A$?
\end{question}

The analogous problem in $\RR^2$ seems to be very difficult, even when we restrict attention to some natural subfamilies of compact sets. Informally speaking, the fact that the group $\Iso(\RR^2)$ is amenable makes it ``difficult'' for sets to be equidecomposable. For example, Laczkovich~\cite{MR1992530} constructed  a continuum of Jordan domains in $\RR^2$, each of measure $1$, which are pairwise non-equidecomposable, answering a question posed by Mycielski~\cite{MR1538275}. 

In $\RR^3$, it is substantially  ``easier'' for sets to be equidecomposable. For example, Banach and Tarski~\cite{BanachTarski} showed that any two bounded subsets of $\RR^3$ with non-empty interiors are equidecomposable. Let us describe a conjectural answer to a variant of Question~\ref{intro-q1} which seemed plausible to us and which motivated this work.

Let us start with  some key definitions. For $A,B\subset \RR^3$, we say that $A$ \emph{covers} $B$ if $B\subset 
\bigcup_{i=1}^k \ga_i(A)$, for some $\ga_1,\ldots, \ga_k\in 
\Iso(\RR^3)$. We say that $A$ and $B$ \emph{cover each other} if $A$ 
covers $B$ and $B$ covers $A$. We note that covering each other is a 
necessary condition for the existence of an equidecomposition.

If $A\subset \RR^3$ is (Lebesgue) measurable then we say that $A$ is \emph{non-negligible} if $\mu(A)>0$. If $A$ and $B$ are measurable subsets of $\RR^3$ then we say that they are \emph{essentially equidecomposable} (resp. \emph{essentially cover each other}) if there exists measurable sets $A', B'$ such that $\mu(A\triangle A') = \mu(B\triangle B') = 0$, and furthermore $A'$ and $B'$ are equidecomposable (resp. $A'$ and $B'$ cover each other). We note that essentially covering each other is a necessary condition for the existence of an essential equidecomposition.

If $A,B\subset \RR^3$ are measurable sets then we say that they are essentially Lebesgue equidecomposable if they are essentially equidecomposable and the parts in the essential equidecomposition can be chosen to be measurable. We note that the condition $\mu(A)=\mu(B)$ is necessary for the existence of an essential Lebesgue decomposition.

Finally, if $A$ is a bounded non-negligible measurable subset of $\RR^3$ then we say that $A$ is a \emph{domain of expansion} if there exists a finite set $S\subset \Iso(\RR^3)$ and $\eps>0$ such that for any measurable $U\subset A$ with $\mu(U) \le \frac{1}{2} \mu(A)$ we have 
$$
    \mu \left(\bigcup_{s\in S} s.U \cap A\right) > (1+\eps) \mu(U).
$$

The relevance of domains of expansion to Question~\ref{intro-q1} stems from the following theorem from~\cite{grabowski2020measurable}.

\begin{theorem}[\cite{grabowski2020measurable}]\label{intro-thm-mot1}
Suppose that $A\subset \RR^3$ is a domain of expansion and let $B\subset \RR^3$ be a measurable set. Then $A$ and $B$ are essentially Lebesgue equidecomposable if and only if $\mu(A)=\mu(B)$ and $A$ and $B$ essentially cover each other.
\end{theorem}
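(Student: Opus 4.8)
The forward implication is routine, and we would dispose of it first: a Lebesgue equidecomposition is assembled from isometries, which preserve Lebesgue measure, so matching parts have equal measure and $\mu(A) = \mu(B)$; and if $A' = \bigsqcup_{i=1}^k A'_i$ and $B' = \bigsqcup_{i=1}^k \ga_i(A'_i)$ with $A'$, $B'$ null-equivalent to $A$, $B$, then $B' \subseteq \bigcup_i \ga_i(A')$, so after discarding a null set $B$ is covered up to measure zero by $k$ isometric copies of $A$, and symmetrically; hence $A$ and $B$ essentially cover each other.

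For the converse the plan is to recast the problem as the construction of a measurable perfect matching and to drive the construction with the expansion hypothesis. After discarding null sets --- which preserves the domain-of-expansion property of $A$ --- we may assume $B \subseteq \bigcup_{i=1}^k \ga_i(A)$ and $A \subseteq \bigcup_{j=1}^m \de_j(B)$ for some $\ga_i, \de_j \in \Iso(\RR^3)$; in particular $B \subseteq \bigcup_i \ga_i(A)$ is bounded, and after translating $B$ (which affects neither hypothesis nor conclusion) we may assume $A \cap B = \emptyset$. We would fix $S$ and $\eps > 0$ witnessing that $A$ is a domain of expansion, fix a large integer $L = L(k, m, \eps)$, and let $T \subseteq \Iso(\RR^3)$ be the finite symmetric set of all words of length at most $L$ in the isometries $\ga_i^{\pm 1}$, $\de_j^{\pm 1}$, $s^{\pm 1}$ ($s \in S$). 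Consider the bipartite graph $\mathcal{G}$ on the standard Borel space $A \sqcup B$, with the measure $\mu|_A \sqcup \mu|_B$, that joins $a \in A$ to $b \in B$ whenever $b = t(a)$ for some $t \in T$: it has degree at most $|T|$, the equivalence relation it generates is measure-preserving because each $t$ is, and a measurable perfect matching of $\mathcal{G}$ is exactly an essential Lebesgue equidecomposition of $A$ with $B$ --- it partitions $A$ mod null into the pieces $A_t$ on which it is realized by $t$, the images $t(A_t)$ partition $B$ mod null, and, $T$ being finite, this is a finite equidecomposition with measurable parts.

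The task thus reduces to constructing a measurable perfect matching of $\mathcal{G}$, and for that we would follow the usual two-step strategy for matchings in expander graphings, in the spirit of the Lyons--Nazarov construction of measurable perfect matchings in measure-preserving graphings with a spectral gap. First, extract approximate data from the coverings: partitioning $A = \bigsqcup_{j=1}^m A_j$ with $A_j \subseteq \de_j(B)$ yields a measurable, piecewise-isometric map $f \colon A \to B$, $f|_{A_j} = \de_j^{-1}$, which is at most $m$-to-one and surjective mod null (since $\mu(B) = \mu(A)$) and reads off an initial measurable matching covering all of $A$ but possibly multiply covering parts of $B$; symmetrically the $\ga_i$ give $g \colon B \to A$. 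Second, improve this by repeatedly toggling measurable systems of vertex-disjoint augmenting paths until the unmatched part disappears. The engine of the improvement is the domain-of-expansion inequality, iterated: under the $S$-action a positive-measure subset of $A$ grows by a factor $1+\eps$ per step until it fills half of $A$, which is a geometric-mixing estimate; and we would show that, once words of length up to $L$ are allowed, this becomes a quantitative lower bound on how fast the alternating paths issuing from the unmatched part of $A$ spread through $A \sqcup B$, so that each toggling round shrinks the unmatched measure --- equal on the two sides, by measure-preservation --- by a fixed factor, and the limiting matching is perfect mod null.

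The hard part, we expect, will be coping with the \emph{one-sidedness} of the hypothesis: $A$ is a domain of expansion but $B$ is an arbitrary measurable set, so $\mathcal{G}$ need not expand on the $B$-side --- for $V \subseteq B$ with $\mu(V)$ close to $\tfrac12\mu(B)$ the coverings alone only give $\mu(N_{\mathcal{G}}(V)) \ge \mu(V)/k$, nowhere near a Hall condition --- so no off-the-shelf two-sided matching theorem applies, and the augmenting process could a priori stall over $B$. The technical core should therefore be a rerouting lemma: whenever an alternating path threatens to stall on the $B$-side, one uses the two covering relations to pass from $B$ back into $A$ and then the expansion of $A$, applied through words of length up to $L$ --- which is precisely why $L$ must be taken large in terms of $k$, $m$, $\eps$ --- to recover the measure lost at each crossing, thereby keeping the spreading estimate alive. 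A further, more routine, point is that the auxiliary sets produced along the iteration are controlled only in measure and not inside a fixed bounded region, so an exhaustion argument is needed to pass to the limit.
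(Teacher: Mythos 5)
First, a point of order: the paper does not prove this statement at all --- it is imported verbatim from \cite{grabowski2020measurable} and used as a black box, so there is no internal proof to compare yours against. Judged on its own terms, your forward implication is fine, and your reduction of the converse to the construction of a measurable perfect matching of the bipartite graph $\mathcal{G}$ on $A\sqcup B$ spanned by a finite set $T$ of isometries is the right move; it is also, in outline, the strategy of the cited reference, which runs a Lyons--Nazarov-style augmenting-path argument powered by the expansion of $A$.

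However, what you have written is a plan rather than a proof: the two steps that carry all the weight are announced as intentions (``we would show\ldots'', ``the technical core should therefore be a rerouting lemma'') and never carried out. Concretely: (i) you do not prove the quantitative spreading estimate, namely that alternating paths of bounded length issuing from the unmatched part reach a set of strictly larger measure --- this is exactly where the one-sidedness of the hypothesis must be overcome by routing excursions into $B$ back into $A$ via the covering maps $\ga_i,\de_j$ before applying the expansion of $A$, and it is the step on which the whole theorem turns; and (ii) you do not address the measurable combinatorics of the iteration --- selecting, in a Borel way, a positive-measure pairwise-disjoint family of augmenting paths at each round, and then showing that the sequence of togglings converges a.e.\ to a genuine matching (a point may change its match infinitely often, so the existence of the limit requires an argument, e.g.\ that each point's match stabilizes a.s.\ or a summability estimate on the symmetric differences of successive matchings). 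Until (i) and (ii) are supplied the argument has a genuine gap at its centre, even though the scaffolding around it is sound.
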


This theorem answers the variant of Question~\ref{intro-q1} for essential Lebesgue equidecompositions satisfactorily when we know that $A$ is a domain of expansion. The notion of domain of expansion was introduced in~\cite{grabowski2020measurable}, where it is shown that it is equivalent to the notion of local spectral gap introduced in~\cite{MR3648974}. Adrian Ioana asked in private communication whether every non-negligible compact subset of $\RR^3$  is a domain of expansion (or equivalently, whether the natural action of $\Iso(\RR^3)$ has the local spectral gap property with respect to every non-negligible compact set). By Theorem~\ref{intro-thm-mot1}, the positive answer to this question would give us a very satisfactory answer to the variant of Question~\ref{intro-q1} for essential Lebesgue equidecompositions: 

\begin{conjecture} Two non-negligible compact subsets of $\RR^3$ are essentially Lebesgue equidecomposable if and only if they have the same measure and they essentially cover each other.
\end{conjecture}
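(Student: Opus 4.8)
Since the abstract announces explicit counterexamples, what must follow is a \emph{disproof} of this conjecture. The plan is to construct two non-negligible compact sets $A,B\subset\RR^3$ of equal measure which essentially cover each other but are \emph{not} essentially Lebesgue equidecomposable. The tool is a ``random fat Menger sponge'': one starts from a cube and, at stage $k$, subdivides every surviving $N_k$-adic subcube into $N_k^3$ pieces and \emph{removes a random subfamily} of them, the pattern of removed cubes being sampled (independently across cubes) from a fixed distribution $\nu_k$; if the expected removed fractions are summable over $k$ then the nested intersection has positive measure almost surely, and it is automatically compact. The two sets $A$ and $B$ are obtained from two sequences $(\nu_k^A)$, $(\nu_k^B)$ of removal distributions designed to agree ``in aggregate'', so that $\mu(A)=\mu(B)$ and — using the statistical homogeneity of the construction at all scales and orientations — $A$ and $B$ essentially cover each other, while nevertheless differing in a finer, scale-local statistic. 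One should also remark that by the contrapositive of Theorem~\ref{intro-thm-mot1} such an $A$ cannot be a domain of expansion.

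The crux is a \emph{rigidity lemma}: for $A,B$ built from the engineered recipes, every $\phi\in\Iso(\RR^3)$, and every measurable $A'\subseteq A$ with $\phi(A')\subseteq B$, one has $\mu(A')=0$. Granting this, a hypothetical essential Lebesgue equidecomposition $A=\bigsqcup_i A_i$, $B=\bigsqcup_i\ga_i(A_i)$ gives $\mu(A)=\sum_i\mu(A_i)=0$, a contradiction, so the conjecture fails. To prove the lemma one wants a pointwise ``fingerprint'' $F_A(x)$ defined for a.e.\ $x$ from the germ of $A$ at $x$, such that: (i) $F_A(x)$ equals almost surely a deterministic value $f_A$ depending only on the recipe $(\nu_k^A)$, by a law of large numbers/martingale argument along the branching tree; (ii) $f_A\neq f_B$ by design; and (iii) $F$ is isometry-invariant, in that if $x$ is a Lebesgue density point of $A'$ and $\phi(A')\subseteq B$ then $F_A(x)=F_B(\phi(x))$ — this uses that a density point of $A'$ maps to a density point of $\phi(A')\subseteq B$ and that near such points $A$ and $A'$ (resp.\ $B$ and $\phi(A')$) differ only by a null proportion. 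Combining (i) and (iii), $f_A=F_A(x)=F_B(\phi(x))=f_B$ on the positive-measure set of density points of $A'$, contradicting (ii) unless $\mu(A')=0$.

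I expect the construction of the fingerprint $F$ to be the main obstacle. It cannot be as crude as the ball-average density $\mu(A\cap B(x,r))/\mu(B(x,r))$, which tends to $1$ at a.e.\ point of any positive-measure set by the Lebesgue density theorem, nor a tangent measure (which is then just $\mathcal L^3$), and must instead record something subtler — for instance the \emph{rate} at which the deficit $\mu(B(x,r)\setminus A)$ decays, or the limiting statistics of the rescaled micro-sets $\tfrac1r(A-x)\cap B(0,1)$ — delicate enough to be invisible to the density theorem yet robust enough to survive an arbitrary isometry, even though $\phi$ does not carry the $N_k$-adic grid of $A$ onto that of $B$ and scrambles the grid ``phase'' at every scale. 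The almost-sure convergence of such an $F$, uniformly over orientations and phases, is a second-moment estimate along the multifractal branching tree; once it and isometry-invariance are established, the equidecomposition obstruction follows formally. The same multifractality supplies the promised compact set of positive measure that is not a domain of expansion directly: at every scale the fat sponge contains a positive density of ``mostly empty'' cubes (the branching martingale limit takes small values with positive probability), so for any finite $S\subset\Iso(\RR^3)$ and any $\eps>0$ one can select a fine-scale cube $Q$ with $Q\cap A$ nearly full but $s.Q\cap A$ nearly empty for every $s\in S\setminus\{\Id\}$, and take $U$ to be half of $Q\cap A$; then $\mu(U)\le\tfrac12\mu(A)$ while $\mu\big(\bigcup_{s\in S}s.U\cap A\big)\le(1+\eps)\mu(U)$, so by Theorem~\ref{intro-thm-mot1} this $A$ is not a domain of expansion, answering Ioana's question.
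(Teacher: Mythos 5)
You correctly read the situation: the ``statement'' is a conjecture that the paper refutes, and the refutation does indeed come from a random-Menger-sponge construction. However, your central strategy is self-defeating. Your rigidity lemma asserts that for every $\phi\in\Iso(\RR^3)$ and every measurable $A'\subseteq A$ with $\phi(A')\subseteq B$ one has $\mu(A')=0$. But a counterexample to the conjecture must consist of sets that \emph{essentially cover each other}, and covering directly contradicts this lemma: if $B\subseteq\bigcup_{i=1}^k\ga_i(A)$ and $\mu(B)>0$, then $\mu(B\cap\ga_i(A))>0$ for some $i$, and $A':=A\cap\ga_i^{-1}(B)$ is a positive-measure subset of $A$ with $\ga_i(A')\subseteq B$. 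So any pair $(A,B)$ satisfying your rigidity lemma fails the covering hypothesis and is therefore not a counterexample to the conjecture at all (it would only witness the easy fact that non-covering sets are non-equidecomposable). The same objection kills the fingerprint scheme, since properties (i)--(iii) together are exactly the rigidity lemma. Independently of this, the fingerprint $F$ is never constructed, and you yourself identify why it is problematic (the Lebesgue density theorem washes out local metric information at a.e.\ point); this is not a repairable technical gap but a sign that the two sets must be distinguished by a mechanism that is \emph{not} a pointwise isometry-invariant of the germ.

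The paper's route avoids this trap by building both counterexample sets out of the \emph{same} random sponge $M_\infty$: with $C=A\cap M_\infty$ and $D=B\cap M_\infty$ for two disjoint dyadic cubes $A,B$ and a translation $\al$, it takes $X=C\cup D\cup\al(C)$ and $Y=C\cup D\cup\al(D)$. Covering each other is then automatic. Non-equidecomposability is obtained not from a rigidity statement forbidding embeddings, but from a ``no internal symmetries'' theorem (Theorem~\ref{thm-subcongruent}, proved by an entropy/counting argument): almost surely no dyadic cube meeting $M_\infty$ can be moved by a nontrivial isometry back into $M_\infty$. Via Lemma~\ref{lem-key} this forces, in any hypothetical equidecomposition of $X$ with $Y$, the isometries acting nontrivially on a suitable small cube to be essentially only $\Id$ and $\al^{-1}$, so some point $z\in C$ and its translate $\al(z)\in\al(C)$ must land on the same point of $Y$, contradicting disjointness of the pieces. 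Your closing sketch for the domain-of-expansion statement (finding a cube $Q$ meeting $A$ with $s.Q\cap A$ essentially empty for all $s\in S\setminus\{\Id\}$) is in the right spirit and close to the paper's Lemma~\ref{lem-key}, but it needs the subcongruence theorem to be justified, and the conclusion follows directly from the definition of domain of expansion rather than from Theorem~\ref{intro-thm-mot1}.
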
 

Alas, our first result shows that this conjecture is false.

\begin{theorem}\label{intro-thm1}
There exist non-negligible compact sets $X,Y\subset \RR^3$ which cover each other, have the same measure, and which are not essentially equidecomposable.
\end{theorem}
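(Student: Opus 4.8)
The plan is to build $X$ and $Y$ as "random Menger sponges" --- nested self-similar-in-distribution constructions in the unit cube $[0,1]^3$ --- so that with positive probability the resulting pair has all three properties, and then fix one such realization. First I would set up the probabilistic framework: at each scale we subdivide every surviving small cube into $N^3$ subcubes and keep a random subfamily according to some distribution, carrying this out simultaneously for $X$ and $Y$ with carefully coupled (or carefully decoupled) randomness. The measure of the limiting set is controlled by a multiplicative martingale (the standard fractal-percolation argument), so by choosing the retention probabilities appropriately both $X$ and $Y$ will have positive measure almost surely, and one can arrange $\mu(X)=\mu(Y)$ either by symmetry of the construction or by a rescaling/normalization step. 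To get the covering condition, I would make the construction "coarsely homogeneous": because both sets contain, at every scale, translated copies of a fixed positive-density pattern, finitely many translates of $Y$ will cover $X$ and vice versa --- this should follow from a Borel--Cantelli / large-deviations estimate showing that no region is ever too sparse, uniformly.

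The heart of the argument is non-equidecomposability. Here I would exploit the fact --- peculiar to random fractals --- that $X$ and $Y$, while statistically similar, are almost surely \emph{not} related by any single isometry on any piece of positive measure, and more strongly that their local structures (the "tangent measures" or the statistics of the sponge at small scales, viewed around $\mu$-typical points) differ in a way that is an isometry-invariant. An essential equidecomposition $X=\bigsqcup A_i$, $Y=\bigsqcup B_i$ with $B_i=\ga_i(A_i)$ would, by a density/Lebesgue-point argument, force at $\mu$-almost every point of some $A_i$ the local picture of $X$ to agree with the local picture of $Y$ at $\ga_i(x)$; I would then derive a contradiction from an almost-sure dichotomy of the random construction, for instance by arranging that the sponge defining $X$ has a "dimension-like" or "density-oscillation" statistic $\io(x)$ that is (a) defined and constant $\mu$-a.e.\ on $X$, (b) defined and equal to a \emph{different} constant $\mu$-a.e.\ on $Y$, and (c) invariant under isometries. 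Items (a) and (b) are the fractal-percolation side --- a $0$--$1$ law or ergodicity of the construction --- while (c) is elementary. A robust choice for $\io$ is something like the a.s.\ value of a renormalized local-mass fluctuation exponent, chosen so that $X$ and $Y$ use visibly different retention distributions.

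The main obstacle I anticipate is making the non-equidecomposition invariant genuinely \emph{bi-Lipschitz/isometry-robust and measure-theoretic rather than topological}: a single isometry can mix scales and rotate, and an equidecomposition has countably many pieces and is only defined up to null sets, so the invariant $\io$ must survive all of this. I would handle it by working with quantities that are defined via $\mu$-almost-everywhere limits of averages over small balls (hence insensitive to null modifications and to which piece $A_i$ a point lies in), and that transform trivially under the linear part of an isometry (hence rotation-proof) --- e.g.\ radially averaged local densities. A secondary obstacle is the simultaneous control of \emph{all three} properties from \emph{one} random model: the retention probabilities that make covering easy (high density, little oscillation) are in tension with those that make the distinguishing statistic robustly different for $X$ versus $Y$; resolving this is essentially a matter of choosing the two distributions close enough in mean (for covering and equal measure) but different enough in a higher-order statistic (for non-equidecomposability), and then invoking independence across scales to push the relevant events to probability one.
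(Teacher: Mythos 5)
Your overall framework (random Menger sponges / fractal percolation, positive measure via the multiplicative martingale) matches the paper's setup, but the core of your non-equidecomposability argument has a genuine gap, and I think it is unfixable in the form you propose. You want an isometry-invariant statistic $\io(x)$, defined $\mu$-a.e.\ by limits of averages over small balls, constant a.e.\ on $X$ and equal to a different constant a.e.\ on $Y$. The problem is that the covering hypothesis --- which the theorem requires you to arrange --- already forces any such invariant to agree on the two sets. If $X\subset\bigcup_{i=1}^k\ga_i(Y)$, then a.e.\ point $x$ of $X$ lies in some $\ga_i(Y)$ and is a Lebesgue density point of both $X$ and $\ga_i(Y)$; at such a point $X\triangle\ga_i(Y)$ has density zero, so any statistic built from ball-averages of indicator functions (hence insensitive to relatively-null perturbations) satisfies $\io_X(x)=\io_Y(\ga_i^{-1}(x))$, giving $c_X=c_Y$. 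Worse, since $X$ and $Y$ have positive measure, the first-order local picture at a.e.\ point is trivially ``density $1$'' for both. If you retreat to second-order statistics (e.g.\ the decay rate of $\mu(B(x,r)\setminus X)$, which for these sponges is governed by the tails of $(p_i)$), you lose the other half of the argument: the equidecomposition pieces $A_i$ only have density $1$ at a.e.\ of their points, which controls $X$ near $x$ and $Y$ near $\ga_i(x)$ up to density-zero errors only, and says nothing about how $X\setminus A_i$ near $x$ relates to $Y\setminus B_i$ near $\ga_i(x)$. So second-order local quantities are not obviously invariants of essential equidecomposition at all. In short: local, a.e.-defined invariants are exactly the wrong tool here, because the theorem's interest lies in exhibiting sets that are locally indistinguishable (they cover each other) yet globally non-equidecomposable.

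The paper's mechanism is entirely different and global. Both sets are carved out of a \emph{single} realization of the sponge: with $C=A\cap M_\infty$, $D=B\cap M_\infty$ for two disjoint dyadic cubes $A,B$ and $\al$ a translation, one takes $X=C\cup D\cup\al(C)$ and $Y=C\cup D\cup\al(D)$, so equal measure and mutual covering are immediate (the sets are unions of the same pieces, just with multiplicities $2{+}1$ versus $1{+}2$). The non-equidecomposability comes from an entropy/counting argument (Theorem~\ref{thm-subcongruent}) showing that a.s.\ the sponge admits no ``partial internal congruence'': no nontrivial isometry maps a positive-measure dyadic piece of $M_\infty$ back into $M_\infty$. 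This rigidity pins down the isometries in any would-be equidecomposition (each $\ga_i$ meeting the relevant cubes must be $\Id$ or $\al^{-1}$ on a set of positive measure) and produces a point $z$ with $z$ and $\al(z)$ forced into two distinct pieces $X_\ell, X_m$ whose images $Y_\ell,Y_m$ must then both contain $z$, contradicting disjointness. If you want to salvage your plan, you would need to replace the local invariant by some such global combinatorial obstruction; the local route is blocked by the covering requirement itself.
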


In fact we can also answer Adrian Ioana's question in the negative, which is the content of our second result.

\begin{theorem}\label{intro-thm2}
There exists a non-negligible compact set in $\RR^3$ which is not a domain of expansion.
\end{theorem}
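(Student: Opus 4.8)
The plan is to obtain Theorem~\ref{intro-thm2} as an immediate corollary of Theorem~\ref{intro-thm1} together with Theorem~\ref{intro-thm-mot1}, with no further construction needed. Concretely, I would claim that the compact set $X\subset\RR^3$ furnished by Theorem~\ref{intro-thm1} is already a non-negligible compact set which is not a domain of expansion, and prove this by contradiction.

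Before the contradiction argument I would record two routine points. First, $X$ is compact, hence bounded, and non-negligible by hypothesis, so the notion of ``domain of expansion'' genuinely applies to it. Second, since $X$ and $Y$ cover each other in the strict sense, they in particular essentially cover each other --- one may take $X'=X$ and $Y'=Y$ in the definition of essentially covering --- and $\mu(X)=\mu(Y)$ by hypothesis. Now suppose, for contradiction, that $X$ were a domain of expansion. Applying Theorem~\ref{intro-thm-mot1} with $A=X$ and $B=Y$, the two facts just verified (equal measure, and essentially covering each other) yield that $X$ and $Y$ are essentially Lebesgue equidecomposable. But by the very definition of that notion, essential Lebesgue equidecomposability is just essential equidecomposability with the pieces additionally required to be measurable; in particular it implies essential equidecomposability. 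Thus $X$ and $Y$ would be essentially equidecomposable, contradicting Theorem~\ref{intro-thm1}. Hence $X$ is not a domain of expansion, which proves the theorem.

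As for the main obstacle: at this point there is essentially none, since all of the difficulty has been absorbed into the proof of Theorem~\ref{intro-thm1}, i.e.\ into the ``random Menger sponge'' construction of $X$ and $Y$ and the verification that they fail to be essentially equidecomposable despite covering each other and having equal measure. The only matters requiring attention in the present deduction are the bookkeeping checks above: that ``cover each other'' upgrades to ``essentially cover each other'' with no loss, and that the implication between the two flavours of essential equidecomposability runs in the direction needed to apply the contrapositive of Theorem~\ref{intro-thm-mot1} correctly. I would also remark in passing that, by the symmetry of the hypotheses of Theorem~\ref{intro-thm1} in $X$ and $Y$, the same argument shows that $Y$ is not a domain of expansion either.
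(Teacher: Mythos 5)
Your deduction is logically sound and non-circular: Theorem~\ref{intro-thm1} is proved in the paper without any appeal to Theorem~\ref{intro-thm2}, covering each other trivially implies essentially covering each other, and essential Lebesgue equidecomposability implies essential equidecomposability by definition, so the contrapositive of Theorem~\ref{intro-thm-mot1} does force $X$ (and, by symmetry, $Y$) to fail to be a domain of expansion. However, this is a genuinely different route from the paper's. The paper proves the stronger and self-contained statement that for every $(M_i)\in\cal R$ the random set $M_\infty$ itself is not a domain of expansion: given a candidate finite set $S\subset\Iso(\RR^3)$ and $\eps>0$, Lemma~\ref{lem-key} produces a dyadic cube $K$ with $\mu(K\cap M_\infty)>0$ such that $\de(K)\cap M_i=\emptyset$ for every $\de\in S\setminus\{\Id\}$ and some $i$, whence $U=K\cap M_\infty$ (shrunk if necessary so that $\mu(U)\le\frac12\mu(M_\infty)$) satisfies $\mu\bigl(M_\infty\cap\bigcup_{\de\in S}\de(U)\bigr)\le\mu(U)$, directly violating expansion. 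What the paper's approach buys is (i) independence from the external Theorem~\ref{intro-thm-mot1}, whose ``if'' direction is the deep implication you are invoking as a black box, and (ii) the more informative conclusion, highlighted in Remark~\ref{intro-rem-tau}, that almost every random Menger sponge fails to be a domain of expansion, with the natural witness $M_\infty$ rather than the composite set $C\cup D\cup\al(C)$. What your approach buys is brevity: once Theorem~\ref{intro-thm1} is in hand, the existence statement follows with no further use of the random construction. If you adopt your route you should say explicitly that you are using the nontrivial implication of Theorem~\ref{intro-thm-mot1} (equal measure plus essential covering implies essential Lebesgue equidecomposability when one set is a domain of expansion), since that is where all the analytic content of your argument is hiding.
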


\begin{remark}\label{intro-rem-tau}

\part Many subsets of $\RR^3$ are domains of expansion. For example, any bounded subset  of $\RR^3$ with a non-empty interior is a domain of expansion, and~\cite[Theorem  1.13]{grabowski2020measurable} provides an example of a nowhere dense compact set which is a domain of expansion.

\part  The main tool in our proofs of Theorems~\ref{intro-thm1} and~\ref{intro-thm2} is a family of probability measures on the set of all compact subsets contained in the unit cube, constructed through a process which imitates the construction of the Menger sponge (see e.g.~\cite{enwiki:1003864679}).
In fact, we prove that a.e.~compact set is not a domain of expansion, where ``a.e.''~is with respect to one of the probability measures which we construct. Nevertheless we do not know of a non-deterministic construction of a set which is not a domain of expansion. Since our random compact sets can be described as ``random Menger sponges'', it seems reasonable to pose the question whether the standard Menger sponge (or the standard Sierpinski pyramid) is a domain of expansion.

\part 

It is possible to prove a Baire category variant of 
Theorem~\ref{intro-thm1}. Let us say that two sets $X,Y\subset \RR^3$ 
are \emph{$\tau$-essentially equidecomposable} if there exist $X', 
Y'\subset \RR^3$ which are equidecomposable and such that $X\setminus 
X'$, $X'\setminus X$, $Y\setminus Y'$, $Y'\setminus Y$ are of first 
category in, respectively $X$, $X'$, $Y$ and $Y'$. Then it can be 
proven that the compact sets in Theorem~\ref{intro-thm1} are not 
\emph{$\tau$-essentially equidecomposable}. We describe necessary 
modifications in the proof in Remark~\ref{rem-tau-essential}.

\part The analogues of Theorems~\ref{intro-thm1} and~\ref{intro-thm2} hold also in $\RR^n$ for every $n\ge 1$, though they are new only for $n\ge 3$.

\trap
\end{remark}

\paragraph{Outline} In Section~\ref{sec-prelim} we prove some estimates about the binary entropy and define our random set model. The main work is done in Section~\ref{sec-subcongruent}, where we show that a.s.~our random compact set $\KK_\infty$ has the property that for every dyadic cube $K\subset \RR^3$ such that $K\cap \KK_\infty\neq \emptyset$ and every $\ga\in \Iso(\RR^3)$ we have that $\ga(K\cap \KK_\infty) \nsubseteq \KK_\infty$ (Theorem~\ref{thm-subcongruent}). This property is then used in Section~\ref{sec-app} to prove Theorems~\ref{intro-thm1} and~\ref{intro-thm2}.

\paragraph{Acknowledgements} We thank Andr\'as M\'ath\'e  and Oleg Pikhurko for very helpful discussions. In particular they suggested to look at ``generic'' compact sets for examples of sets which are not domains of expansion.

\section{Preliminaries}\label{sec-prelim}

We let $\NN:=\{0,1,\ldots\}$ and $\NN_+:=\{1,2,\ldots\}$. We use the shorthand $(x_i)$ to denote a sequence $(x_i)_{i\in \NN}$. The cardinality of a set $X$ is denoted with $|X|$.

\subsection{Binary entropy estimates}
The word ``logarithm'' is a shorthand for ``base-$2$ logarithm'', and similarly the symbol $\log$ denotes the base-$2$ logarithm.
%~ and $\ln$ denotes the natural logarithm. We use the standard convention that $0\log(0):=0$. 

For a random variable $V$ we let $\range(V)$  be the set of all values which $V$ can take, and we define $\logrange(V) := \log(|\range(V)|)$. If $X$ is a finite set and $V$ is an $X$-valued random variable with law $\nu$ then we let $H(V)$ be the entropy of $V$, i.e.~$H(V) = \sum_{x\in X} -\nu(x)\log(\nu(x))$. For $p\in [0,1]$ we let 
$$
    h(p) := -p\log(p)- (1-p)\log(1-p).
$$ 

Let us recall how the binary entropy $h$ can be used to estimate the binomial coefficients.

\begin{lemma}[{\cite[§X.11, Lemma 7]{MR0465509}}]\label{lem-prenewton}
For $n\in \NN_+$ and $\al \in (0,1)$ such that $\al n\in \NN$ we have 
$$
    \frac{1}{2\sqrt{2}}G \le {n\choose \al n} \le \frac{1}{\sqrt{2\pi}}G,
$$
where 
$$
    G = \frac{2^{h(\al) n}}{ \sqrt{n\al(1-\al)}}.
$$
\end{lemma}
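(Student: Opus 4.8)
The plan is to read the estimate off Stirling's formula with explicit error terms: the binary entropy $h(\al)$ is exactly what appears when the ``$(m/e)^m$'' parts of the three factorials in $\binom{n}{\al n}$ are recombined, and what is left over is a harmless error factor.

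First I would record the sharp (Robbins) form of Stirling's formula: every $m\in\NN_+$ can be written as $m! = \sqrt{2\pi m}\,(m/e)^m e^{\la_m}$ with $\tfrac{1}{12m+1} < \la_m < \tfrac{1}{12m}$. Writing $k := \al n$ — and noting that $\al\in(0,1)$, $\al n\in\NN$ force $n\ge2$ and $1\le k\le n-1$ — I would substitute this for $m\in\{n,k,n-k\}$ in $\binom nk = n!/(k!\,(n-k)!)$. The powers of $e$ cancel since $k+(n-k)=n$; the quotient $n^n/(k^k(n-k)^{n-k})$ equals $\al^{-\al n}(1-\al)^{-(1-\al)n} = 2^{h(\al)n}$; and the factors $\sqrt{2\pi m}$ combine into $\tfrac{1}{\sqrt{2\pi}}\sqrt{n/(k(n-k))} = \tfrac{1}{\sqrt{2\pi}}\,(n\al(1-\al))^{-1/2}$. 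This gives the exact identity
$$
    \binom{n}{\al n} = \frac{1}{\sqrt{2\pi}}\,G\cdot e^{\la_n-\la_{\al n}-\la_{(1-\al)n}},
$$
and, since $\tfrac{1}{\sqrt{2\pi}}\cdot\tfrac{\sqrt\pi}{2} = \tfrac{1}{2\sqrt2}$, the lemma is reduced to the two-sided estimate $\tfrac{\sqrt\pi}{2}\le e^{\la_n-\la_{\al n}-\la_{(1-\al)n}}\le 1$ for the error factor.

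The upper bound is the easy half: as $k\ge1$ we have $n-k\le n-1$, hence $\la_{n-k} > \tfrac{1}{12(n-k)+1}\ge\tfrac{1}{12n-11} > \tfrac{1}{12n} > \la_n$, and adding $\la_k>0$ gives $\la_k+\la_{n-k}>\la_n$, i.e. $e^{\la_n-\la_k-\la_{n-k}}<1$. The lower bound is where I expect the one real difficulty, because the constant $\tfrac{1}{2\sqrt2}$ is sharp (equality holds at $n=2$, $\al=\tfrac12$), so the crude Robbins estimates only barely suffice and a short case split is forced. Splitting on $\min(k,n-k)$: if $k,n-k\ge2$ then $\la_k+\la_{n-k} < \tfrac{1}{24}+\tfrac{1}{24}=\tfrac{1}{12}$, so the error factor exceeds $e^{-1/12} > \tfrac{\sqrt\pi}{2}$; if $\min(k,n-k)=1$, say $k=1$, then $\la_1+\la_{n-1} < \tfrac{1}{12}+\tfrac{1}{12(n-1)}$, which for $n\ge3$ — using also the lower Robbins bound $\la_n>\tfrac{1}{12n+1}$ — still leaves the error factor above $\tfrac{\sqrt\pi}{2}$ with room to spare; and the single remaining case $n=2$, $k=1$ is checked directly ($\binom21=2$, $G=4\sqrt2$, whence $\tfrac{1}{2\sqrt2}G = 2 = \binom21 \le \tfrac{4}{\sqrt\pi}=\tfrac{1}{\sqrt{2\pi}}G$). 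The argument is entirely elementary; its only subtlety is arranging the small-$n$ arithmetic so that the sharp case $n=2$ comes out as an attained equality rather than a violated inequality.
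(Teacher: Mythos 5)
Your proof is correct, but note that the paper does not prove this lemma at all: it is quoted verbatim from the literature (MacWilliams--Sloane, Ch.~X, \S 11, Lemma 7), so there is no in-paper argument to compare against. Your Robbins-refined Stirling derivation is the standard route to exactly this two-sided bound, and the details check out: the exact identity $\binom{n}{\al n}=\tfrac{1}{\sqrt{2\pi}}G\,e^{\la_n-\la_{\al n}-\la_{(1-\al)n}}$ is right, the constant conversion $\tfrac{1}{\sqrt{2\pi}}\cdot\tfrac{\sqrt\pi}{2}=\tfrac{1}{2\sqrt2}$ is right, and the upper bound via $\la_{n-k}>\tfrac{1}{12(n-k)+1}\ge\tfrac{1}{12n-11}>\tfrac{1}{12n}>\la_n$ is clean. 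The one place a reader might worry is the boundary case $\min(k,n-k)=1$, since a uniform bound $\la_1+\la_{n-1}<\tfrac18$ would \emph{not} suffice ($e^{-1/8}\approx0.8825<\tfrac{\sqrt\pi}{2}\approx0.8862$); but your actual estimate $\la_n-\la_1-\la_{n-1}>\tfrac{1}{12n+1}-\tfrac{1}{12}-\tfrac{1}{12(n-1)}$ is worst at $n=3$, where it equals about $-0.098>\log_e(\sqrt\pi/2)\approx-0.121$, so the case split does close as you claim, with $n=2$, $\al=\tfrac12$ giving the attained equality $\binom21=2=\tfrac{1}{2\sqrt2}\cdot4\sqrt2$. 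If the authors had wanted a self-contained paper, your argument is exactly what they would have had to include; as it stands, the citation buys them brevity and your proof buys self-containment at the cost of the small-$n$ arithmetic you describe.
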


It will be convenient to use the following well-known corollary in the computations.

\begin{lemma}\label{lem-newton}
For $n\in \NN_+$ and $\al\in [0,1]$ such that $\al n\in \NN$, we have
$$
    \frac{2^{h(\al)n}}{n}\le {n\choose \al n} \le 2^{h(\al)n}
$$
\end{lemma}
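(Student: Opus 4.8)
The plan is to establish the two inequalities separately, invoking Lemma~\ref{lem-prenewton} only for the non-degenerate range $\al\in(0,1)$ and checking the boundary values $\al\in\{0,1\}$ (and the case $n=1$) directly. The upper bound is the easier one and in fact needs nothing beyond the binomial theorem: I would expand $1=(\al+(1-\al))^n=\sum_k {n\choose k}\al^k(1-\al)^{n-k}$ and discard all but the single term of index $\al n$, obtaining $1\ge {n\choose \al n}\,\al^{\al n}(1-\al)^{(1-\al)n}$. Since $\al^{\al n}(1-\al)^{(1-\al)n}=2^{\al n\log\al+(1-\al)n\log(1-\al)}=2^{-h(\al)n}$, rearranging gives ${n\choose \al n}\le 2^{h(\al)n}$. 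With the convention $0\log 0=0$ this computation remains valid at $\al\in\{0,1\}$, where both sides equal $1$.

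For the lower bound I would first dispose of the degenerate cases $\al\in\{0,1\}$: there ${n\choose \al n}=1$ and $h(\al)=0$, so $2^{h(\al)n}/n=1/n\le 1={n\choose \al n}$. For $\al\in(0,1)$, the hypothesis $\al n\in\NN$ forces $\al n\ge 1$, hence $n\ge 2$, so Lemma~\ref{lem-prenewton} applies and gives ${n\choose \al n}\ge \frac{1}{2\sqrt 2}\cdot\frac{2^{h(\al)n}}{\sqrt{n\al(1-\al)}}$. It then suffices to verify $\frac{1}{2\sqrt 2\,\sqrt{n\al(1-\al)}}\ge\frac1n$, i.e.\ $n\ge 8\al(1-\al)$, and this holds because $\al(1-\al)\le\tfrac14$ while $n\ge 2$.

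I expect no genuine obstacle: the whole argument is only three or four lines. The sole points requiring a little care are matching the constant $\tfrac{1}{2\sqrt 2}$ produced by Lemma~\ref{lem-prenewton} against the crude factor $\tfrac1n$ (which is where the bound $n\ge 2$ is used), and handling the boundary situations $n=1$ and $\al\in\{0,1\}$, which lie outside the hypotheses of Lemma~\ref{lem-prenewton} and so must be treated by hand.
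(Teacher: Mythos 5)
Your argument is correct, and the lower bound is handled exactly as in the paper: reduce to $\al\in(0,1)$ (forcing $n\ge 2$), invoke Lemma~\ref{lem-prenewton}, and compare $\frac{1}{2\sqrt{2}\sqrt{n\al(1-\al)}}$ with $\frac1n$ via $\al(1-\al)\le\frac14$; the degenerate cases $\al\in\{0,1\}$ are likewise checked by hand in both proofs. Where you genuinely diverge is the upper bound. The paper also derives it from Lemma~\ref{lem-prenewton}, which requires the small extra computation that $\al(1-\al)\ge\frac{n-1}{n^2}$ on the admissible range (so that $\sqrt{2\pi n\al(1-\al)}>1$ when $n\ge 2$). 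You instead use the one-line binomial-theorem argument
$1=(\al+(1-\al))^n\ge\binom{n}{\al n}\al^{\al n}(1-\al)^{(1-\al)n}=\binom{n}{\al n}2^{-h(\al)n}$,
which is more elementary, avoids any dependence on the Stirling-type estimate, and covers $\al\in\{0,1\}$ uniformly with the convention $0\log 0=0$. Both routes are sound; yours trades one appeal to the cited lemma for a self-contained inequality, while the paper's keeps everything flowing through a single external reference.
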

\begin{proof}
When $\al\in \{0,1\}$, the inequalities become $\frac{1}{n} \le 1 \le 1$, which is true. When $\al\in (0,1)$ and $\al n \in \NN$, we see that necessarily $n\ge 2$. Note also that $\al(1-\al)\le \frac{1}{4}$. Now the inequality
$$
    \frac{2^{h(\al)n}}{n }\le {n\choose \al n}
$$
follows from the previous lemma, since $\frac{1}{2\sqrt{2} \sqrt{n\al(1-\al)}} \ge \frac{1}{\sqrt{2} \sqrt{n}} \ge \frac{1}{n}$.

%~ \frac{2^{h(\al)n}}{4\sqrt{n} } < \frac{2^{h(\al) n}}{ 4\sqrt{n\al(1-\al)}}$. 

It remains to show that if $\al\in (0,1)$ and $\al n \in \NN$ (hence $n\ge 2$ and $\frac 1n \le \al \le 1-\frac 1n$), then 
\begin{equation}\label{eq-sd}
 {n\choose \al n} \le 2^{h(\al)n}.
\end{equation}
The value of $\al(1-\al)$ is minimal for $\al=\frac{1}{n}$ and $\al =1-\frac{1}{n}$, and it is equal to $\frac{n-1}{n^2}$. It follows that $\sqrt{2\pi n\al(1-\al)} \ge \sqrt{2\pi \frac{n-1}{n}}$, and since $n\ge 2$, this is greater than $1$. Now the inequality~\eqref{eq-sd} follows from the previous lemma.
\end{proof}

Let us also state the following estimate.

\begin{lemma}\label{lem-ediff}
Let $q\in (0,1)$, $\al \in [0,q]$. We have 
$$
h\left(\frac{q-\alpha}{1-\alpha}\right)(1-\al) \le  h(q) - (1-q)\cdot \al.
$$
\end{lemma}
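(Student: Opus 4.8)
The plan is to reduce the two‑parameter inequality to a clean one‑variable statement via the ``grouping'' (chain‑rule) identity for the binary entropy. First I would record the exact identity
\begin{equation}\label{eq-grouping}
    h(\al) + (1-\al)\,h\!\left(\frac{q-\al}{1-\al}\right) = h(q) + q\,h\!\left(\frac{\al}{q}\right),
\end{equation}
valid for all $q\in(0,1)$ and $\al\in[0,q]$ under the convention $0\log 0 = 0$. This can be verified either by a direct computation — expanding every logarithm (using $\log\frac{q-\al}{1-\al}=\log(q-\al)-\log(1-\al)$, etc.) and cancelling — or, more conceptually, by writing both sides as two ways of decomposing the Shannon entropy of the three‑point distribution with weights $(\al,\,q-\al,\,1-q)$: grouping the last two outcomes gives the left‑hand side, grouping the first two gives the right‑hand side. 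Given \eqref{eq-grouping}, the asserted inequality $h\!\left(\frac{q-\al}{1-\al}\right)(1-\al)\le h(q)-(1-q)\al$ is equivalent to
\begin{equation}\label{eq-reduced}
    h(\al) - q\,h\!\left(\frac{\al}{q}\right) \ge (1-q)\,\al .
\end{equation}

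To prove \eqref{eq-reduced} I would fix $\al\in(0,1)$ (the case $\al=0$ being trivial) and study $\psi(q):=h(\al)-q\,h(\al/q)-(1-q)\al$ as a function of $q$ on $[\al,1]$. It is continuous on $[\al,1]$, satisfies $\psi(1)=h(\al)-h(\al)=0$, and, using $h'(t)=\log\frac{1-t}{t}$, one computes for $q\in(\al,1)$
$$
    \psi'(q) = \log\!\left(\frac{q-\al}{q}\right) + \al .
$$
Thus it suffices to show $\psi'(q)\le 0$ on $(\al,1)$, i.e.\ $1-\tfrac{\al}{q}\le 2^{-\al}$. Since $q\le 1$ we have $1-\tfrac{\al}{q}\le 1-\al$, and the elementary estimate $2^{-\al}=e^{-\al\ln 2}\ge 1-\al\ln 2\ge 1-\al$ completes this. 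Hence $\psi$ is non‑increasing on $[\al,1]$, so $\psi(q)\ge\psi(1)=0$, which is exactly \eqref{eq-reduced}, and combining with \eqref{eq-grouping} yields the lemma.

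The calculations above are all routine; the only genuinely non‑obvious step — and the one I would expect to be the main obstacle — is spotting the identity \eqref{eq-grouping}, which is what turns an awkward inequality in $q$ and $\al$ into the transparent one‑variable statement \eqref{eq-reduced}. A secondary point to check carefully is that the degenerate cases cause no trouble: in \eqref{eq-grouping} the convention $0\log 0=0$ covers $\al\in\{0,q\}$ (and $\al\le q<1$ keeps $1-\al>0$), while for the monotonicity argument one only needs $\psi$ continuous on the closed interval $[\al,1]$ and differentiable on the open interval $(\al,1)$, so the blow‑up of $h'$ at the endpoints of $[0,1]$ is harmless.
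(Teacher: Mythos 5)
Your proof is correct and is essentially the paper's argument in a light disguise: the grouping identity turns the left-hand side into $h(q)+q\,h(\al/q)-h(\al)$, so your $\psi(q)$ coincides with the paper's function $F(\al,q)-(1-q)\al$ obtained by direct expansion, and both proofs then compute the same derivative $\log\frac{q-\al}{q}$, bound it by $-\al$, and integrate from $q$ to $1$ using the vanishing at $q=1$. No substantive difference, and all the degenerate cases are handled correctly.
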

\begin{proof}

A straightforward calculation shows that 
\begin{equation}\label{todo43}
    h(q)-h\left(\frac{q-\alpha}{1-\alpha}\right)(1-\alpha)=-q\log q - (1-\alpha)\log(1-\alpha) + (q-\alpha)\log(q-\alpha).
\end{equation}

Let us denote the right-hand side of~\eqref{todo43} by $F(\al,q)$. A direct check shows that $\frac{d}{dq} F(\al, q) = \log(\frac{q-\al}{q})$. Since $\log(\frac{q-\al}{q}) \le \frac{q-\al}{q}-1 = -\frac{\al}{q}\le-\al$, we have $\frac{d}{dq} F(\al,q) \le -\al$. Furthermore we have $F(\al,1)=0$, and so by the mean value theorem we have  
$$
    F(\al, q) = F(\al,q) - F(\al,1) \ge (1-q) \cdot \al ,
$$
which establishes the lemma.
\end{proof}

\subsection{Dyadic cubes}

Let $i\in \NN$. We say that $K\subset \RR^3$ is a \emph{dyadic $i$-cube} (or simply an \emph{$i$-cube}) if it is a closed solid cube whose side lengths are $\frac{1}{2^i}$, and whose corners have coordinates of the form $(\frac{a}{2^i}, \frac{b}{2^i}, \frac{c}{2^i})$ , where $a,b,c\in \{0,1,\ldots, 2^i\}$.

A \emph{dyadic cube} is a set which is a dyadic $i$-cube for some $i$.  A \emph{dyadic complex} is a union of finitely many dyadic cubes. A \emph{ dyadic $i$-complex} (or simply an \emph{$i$-complex}) is a dyadic complex which is equal to a union of  dyadic $i$-cubes (thus any dyadic $i$-complex  is also a dyadic $j$-complex for any $j\ge i$).

The unit cube in $\RR^3$ will be denoted by $K_0$.

\begin{remark}
It is not difficult to check that every compact subset of the unit cube is the intersection of a descending sequence of dyadic complexes. 
\end{remark}

%~ \newop{N}{N}

%~ For $U\subset \RR^3$ and $S\in \NN$ let $\N(S,U)$ be the set of all $S$-cubes $L$ such that $\mu(U\cap L)>0$. 

\begin{remark}\label{rem-order}
For every $S\in \NN$ we consider the finite set of $S$-cubes to be ordered, with the order induced by the lexicographic order on the coefficients of the midpoints of the cubes. More generally, given a fixed dyadic complex $K$, and $S\in \NN$, the finite set of all $S$-cubes contained in $K$ will be considered  with the order induced from the order just described. 

This will be used in the following fashion: given $i\in \NN$ and a dyadic $i$-complex $K$ which is a union of $m$ distinct $i$-cubes, in order to specify a $j$-subcomplex of $K$, with $j\ge i$, it is enough to specify a sequence of length $m$, whose elements are subsets of the set $\{0,\ldots, 8^{j-i}-1\}$. Indeed, the $k$-the element of such a sequence determines which $j$-cubes in the $k$-th $i$-cube of $K$ to keep. 
\end{remark}

\subsection{Random set model}

\begin{definition}\label{s-growth}
For the remainder of this article we fix $p\in (0,1)$, a sequence $(p_i)$ of rational numbers in $(0,1)$, and a sequence $(s_i)$ of positive natural numbers. For $i\in \NN$  we let $P_i := \prod_{j\le i} p_j$ and  $S_i := \sum_{j\le i} s_j$. We assume that the following conditions hold.
\part  $p_0=1$,
\part $\prod_{i\in \NN} p_i =p$,
\part $\forall i\in \NN$ we have that $p_i8^{s_i}\in \NN$,
\part $\forall i\in \NN$ we have  $(1-p_{i+1})8^{S_i} \ge 7^{S_i}$,
\part $\forall i\in \NN$ we have $3\cdot 8^{S_i} \le s_{i+1}$.
\trap
\end{definition}

\begin{remark}
Clearly the conditions (d) and (e) are satisfied ``as soon as $(s_i)$ grows sufficiently quickly''. In particular for any $(p_i)$ such that $\prod_{i\in \NN} p_i =p$ we can find $(s_i)$ such that the conditions (d) and (e) hold.
\end{remark}

Let $\cal D$ be the set of all compact subsets of $K_0$. We now proceed to define a random variable $(\KK_i)_{i\in \NN}$ with values in  $\prod_\NN \cal D$, with the property that $\forall i\in \NN$ we have that $\KK_{i}$ is an $S_i$-complex and furthermore $\KK_{i+1}\subset \KK_i$. 

We let $\KK_0 := K_0$. Then inductively let us assume that $\mathbb K_i$ has been defined for some $i\in 
\NN$ and that $\mathbb K_i$ is a union of $m_i$ different $S_{i}$-cubes 
$L_0,\ldots, L_{m_i-1}$. Then we subdivide each $L_j$ into $8^{s_{i+1}}$ dyadic $S_{i+1}$-cubes, and for each $j$ we choose uniformly at random exactly $p_{i+1}\cdot 8^{s_{i+1}}$ of those cubes. 

%~ \begin{remark} We treat the sequence $(\mathbb K_i)_{i\in\NN}$ as a random variable with values in $\prod_\NN \cal D$, where $\cal D$ is the collection of all compact subsets of $\RR^3$. 

% Maybe we should change \cal D to K(R^3) which is the most common notation for the space of compact subsets of R^3.

%~ Through the coordinate projections we obtain the corresponding $\cal D$-valued random variables which are denoted by $K_i$, $i\in \NN$. 

Finally, we let $\mathbb K_\infty:=\bigcap_{i\in \NN} \mathbb K_i$, which is a $\cal D$-valued random variable determined by the random variable $(\mathbb K_i)$. 

%~ The random variable $(\KK_i)$ depends on the sequence $(p_i)$ and $(s_i)$, and if 
%~ we need to stress this dependence we will write $\KK_i((p_i),(s_i))$. 

\section{No ``internal symmetries'' in $\KK_\infty$}\label{sec-subcongruent}

Let $K$ be a dyadic cube complex, let $S\in \NN$, and let $A$ be a dyadic $S$-cube. We say that $A$ is \emph{subcongruent in $K$} if $\mu(A\cap K)>0$ and there exists $\ga \in \Iso(\RR^3)\setminus \{\Id\}$ such that  $A\cap \ga(A) = \emptyset$ and $\ga(A\cap K)\subset K$. Similarly we say that $K$ \emph{contains a subcongruent $S$-cube} if there exists an $S$-cube $A$ which is subcongruent in $K$.

In this section we prove the following theorem.

\begin{theorem}\label{thm-subcongruent}
$\forall S\in \NN$  $\exists i_0\in \NN$ such that $\forall i>i_0$ we have 
\begin{equation}\label{eq-no-congr}
\Pr(\text{$\mathbb K_i$ contains a subcongruent $S$-cube}) < 2^{-5^{S_i}}
\end{equation}
\end{theorem}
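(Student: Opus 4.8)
The plan is as follows. Fix $S$. Since $K_0$ contains only $8^{S}$ dyadic $S$-cubes, it suffices to prove that for each fixed dyadic $S$-cube $A\subseteq K_0$ and all large $i$ one has $\Pr(A\text{ is subcongruent in }\KK_i)<8^{-S}2^{-5^{S_i}}$, and then absorb the factor $8^{S}$. So fix such an $A$, condition on $\KK_{i-1}$, and observe that if no dyadic $S_{i-1}$-cube contained in $A$ lies in $\KK_{i-1}$, then $\mu(A\cap\KK_i)=0$ and $A$ is not subcongruent in $\KK_i$; hence we may assume some such cube exists, fix one and call it $L$, and condition further on the (conditionally uniformly chosen) $p_i8^{s_i}$-element set $T$ of dyadic $S_i$-cubes of $\KK_i$ contained in $L$. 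Everything below is uniform in the choice of $T$ and of admissible $\KK_{i-1}$.

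Suppose $\ga\in\Iso(\RR^3)\setminus\{\Id\}$ witnesses that $A$ is subcongruent in $\KK_i$, i.e.\ $A\cap\ga(A)=\emptyset$ and $\ga(A\cap\KK_i)\subseteq\KK_i$. Then $L\cap\KK_i$ is a union of $|T|=p_i8^{s_i}$ dyadic $S_i$-cubes, so $\ga(L\cap\KK_i)$ has measure $p_i8^{-S_{i-1}}$, lies in $\KK_i$, and is disjoint from $L$ (it is contained in $\ga(A)$, which is disjoint from $A\supseteq L$). Since $\ga(L\cap\KK_i)\subseteq\ga(L)$, a cube congruent to $L$, which meets at most $27$ dyadic $S_{i-1}$-cubes, by pigeonhole there is a dyadic $S_{i-1}$-cube $M\neq L$ with $\mu(\ga(L\cap\KK_i)\cap M)\ge\frac{1}{27}p_i8^{-S_{i-1}}$. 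Covering $\ga(L\cap\KK_i)\cap M$ by the dyadic $S_i$-cubes of $M$ that meet it in positive measure produces a set of at least $\frac{p_i}{27}8^{s_i}$ such cubes, each of which lies in $\KK_i$ (a dyadic $S_i$-cube that meets $\KK_i$ in positive measure is contained in $\KK_i$). Crucially, this set depends on $\ga$ only through its \emph{combinatorial type}, namely the family, indexed by the dyadic $S_i$-cubes $c\subseteq A$, of the sets $N(c)$ of dyadic $S_i$-cubes that $\ga(c)$ meets in positive measure: indeed, given the type $\tau$ and the fixed set $T$, the candidate set $\cal E_{\tau,T,M}:=\{D\subseteq M: D\in N_\tau(c)\text{ for some }c\in T\}$ is determined, it always satisfies $\cal E_{\tau,T,M}\subseteq\KK_i$ (because $c\in T$ forces $\ga(c)\subseteq\KK_i$), and the $M$ produced by the pigeonhole step satisfies $M\neq L$ and $|\cal E_{\tau,T,M}|\ge\frac{p_i}{27}8^{s_i}$. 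Hence, on the conditioning above, the event that $A$ is subcongruent in $\KK_i$ is contained in the union over the types $\tau$ and over the dyadic $S_{i-1}$-cubes $M\neq L$ with $|\cal E_{\tau,T,M}|\ge\frac{p_i}{27}8^{s_i}$ of the events $\{\cal E_{\tau,T,M}\subseteq\KK_i\}$. The number of types is at most $2^{O(S_i)}$, by the standard bound on the number of cells in an arrangement of $O(8^{2S_i})$ bounded-degree algebraic hypersurfaces in the $6$-dimensional space of isometries, and the number of relevant $M$ is at most $8^{S_{i-1}}$.

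For a fixed such $M$ and $\cal E:=\cal E_{\tau,T,M}$ with $|\cal E|=\alpha 8^{s_i}$: since $M\neq L$, conditionally on $\KK_{i-1}$ and $T$ the set $\KK_i\cap M$ is empty if $M\notin\KK_{i-1}$, and otherwise is a uniformly chosen $p_i8^{s_i}$-subset of the $8^{s_i}$ dyadic $S_i$-cubes of $M$, independent of $T$. In either case (interpreting the probability as $0$ when $\alpha>p_i$), Lemmas~\ref{lem-newton} and~\ref{lem-ediff} give
\[
  \Pr(\cal E\subseteq\KK_i\mid\KK_{i-1},T)\;\le\;\frac{\binom{(1-\alpha)8^{s_i}}{(p_i-\alpha)8^{s_i}}}{\binom{8^{s_i}}{p_i8^{s_i}}}\;\le\;8^{s_i}\,2^{-(1-p_i)\alpha 8^{s_i}}\;\le\;8^{s_i}\,2^{-\frac{p_i(1-p_i)}{27}8^{s_i}}.
\]
Summing over the at most $2^{O(S_i)}\cdot 8^{S_{i-1}}$ pairs $(\tau,M)$ and over the $8^S$ cubes $A$, and using uniformity in $T$ and $\KK_{i-1}$, we obtain
\[
  \Pr(\KK_i\text{ contains a subcongruent }S\text{-cube})\;\le\;2^{O(S_i)}\,2^{-\frac{p_i(1-p_i)}{27}8^{s_i}}.
\]

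Finally, $p_i=P_i/P_{i-1}\ge P_i\ge p$ by condition~(b), $1-p_i\ge(7/8)^{S_{i-1}}$ by condition~(d), and $8^{s_i}\ge 8^{3\cdot 8^{S_{i-1}}}$ by condition~(e) — the last condition also furnishing, once $i$ is large, the elementary volume-versus-surface-area estimate behind the bound $|\cal E_{\tau,T,M}|\ge\frac{p_i}{27}8^{s_i}$. Hence the exponent $\frac{p_i(1-p_i)}{27}8^{s_i}$ is at least $\frac{p}{27}(7/8)^{S_{i-1}}8^{s_i}$, which grows doubly exponentially in the scale parameter $S_{i-1}$, whereas $5^{S_i}$ together with the union-bound factor $2^{O(S_i)}$ is only singly exponential in $S_i=S_{i-1}+s_i$; a direct comparison then yields $i_0=i_0(S)$ beyond which the right-hand side above is below $2^{-5^{S_i}}$, as required. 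I expect the main obstacle to be the passage from the continuum of isometries to the $2^{O(S_i)}$ combinatorial types, together with verifying that this count is genuinely singly exponential; the remainder is pigeonhole combined with the entropy estimates already available. A secondary point needing care is that a rotated dyadic cube can touch the interior of a grid cube in a set of measure zero, but this is harmless for an upper bound, since the implication ``$\ga(c)\subseteq\KK_i$ and $\mu(\ga(c)\cap D)>0$ $\Rightarrow$ $D\subseteq\KK_i$'' remains valid.
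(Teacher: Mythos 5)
Your argument is correct in outline and in its quantitative conclusions, but it takes a genuinely different route from the paper. The paper does not run a union bound directly: it exploits the fact that $\KK_i$ is uniform on its range and compares $\logrange(\KK_i)$ with $\logrange(\KK_i\mid C_i)$, encoding a conditioned realization $M$ by the tuple $(\prev(M),A_M,\bar\ga_M,M\cap A_M,M\setminus A_M)$, where $\bar\ga_M$ is taken from an explicit metric net $\Iso(\RR^3,R(S_{i+1}))$ of size at most $2^{16S_{i+1}+144}$, and a shrinking operation $\inter_n$ absorbs the approximation error so that $(A_M,\bar\ga_M,M\cap A_M)$ forces a $\tfrac18\mu(M\cap A_M)$-fraction of $M\setminus A_M$. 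You instead condition on $\KK_{i-1}$ and on the refinement $T$ inside a single source cell $L\subseteq A$, use the conditional independence of the refinements in distinct $S_{i-1}$-cells, pigeonhole the image $\ga(L\cap\KK_i)$ into one of at most $27$ target cells $M\neq L$, and union-bound over target cells and over \emph{combinatorial types} of $\ga$ rather than over a metric net. Both discretizations of $\Iso(\RR^3)$ do the same job, and both proofs then invoke the same entropy estimates (Lemmas~\ref{lem-newton} and~\ref{lem-ediff}); your localization to a single pair $(L,M)$ only costs the harmless factor $\tfrac1{27}$ in the forced density, versus the paper's $\tfrac18$ obtained globally. What the paper's route buys is self-containedness: the net and the $\inter_n$ bookkeeping are elementary. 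What your route buys is a cleaner probabilistic statement (an honest conditional union bound, with the independence made explicit) and no approximation-error analysis, at the price of importing a nontrivial external fact.

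That external fact is the one genuine soft spot, and you rightly flag it: the claim that there are only $2^{O(S_i)}$ types $(N(c))_c$ needs (i) that each condition $\mu(\ga(c)\cap D)>0$ is a Boolean combination of boundedly many bounded-degree polynomial inequalities in the matrix entries of $\ga$ (e.g.\ via a separating-axis description of when two cubes have overlapping interiors), and (ii) a Warren/Milnor--Thom-type bound on realizable sign conditions of $O(8^{2S_i})$ such polynomials on the $6$-dimensional variety $\Iso(\RR^3)$. This is true and the slack in your final comparison ($\frac{p_i(1-p_i)}{27}8^{s_i}$ is doubly exponential in $S_{i-1}$ while $5^{S_i}$ and all union-bound factors are singly exponential) tolerates even a much weaker type count, but it is real machinery. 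If you want to stay elementary, you can keep your union-bound structure and simply index the union by the paper's net $\Iso(\RR^3,R(S_i))$ together with the $\inter$ trick of Lemma~\ref{lem-F}, which gives the count $2^{16S_i+144}$ for free. Your closing attribution of the bound $|\cal E_{\tau,T,M}|\ge\frac{p_i}{27}8^{s_i}$ to condition (e) is a red herring --- that bound is pure pigeonhole plus covering --- but this is cosmetic.
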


The idea of the proof is a counting argument: we will bound from below the logrange of the variable $\KK_i$, and we will bound from above the logrange of the variable $\KK_i$ under the condition that $\KK_i$ contains a subcongruent $S$-cube.

We start by estimating the logrange of $\KK_i$. 
\begin{proposition}\label{prop-entropy}
\part We have that $\mathbb K_i$ is a union of $m_i :=P_i\cdot 8^{S_i}$  dyadic $S_i$-cubes. In particular we have that $\mu(\KK_i) = P_i$ and $\mu(\KK_\infty)= p$. 
\part For $i\in\NN$ we have
$$
\logrange (\KK_{i+1}) \ge \logrange (\KK_{i})  +  P_{i}\cdot  h(p_{i+1})\cdot 8^{S_{i+1}}  -S_{i+1}^2.
$$
\trap
\end{proposition}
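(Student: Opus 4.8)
The plan is to handle the two parts separately, with part (a) being a direct induction and part (b) the real work.

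For part (a), I would argue by induction on $i$. The base case $\KK_0 = K_0$ is a single $S_0$-cube (note $S_0 = s_0$), and indeed $P_0 \cdot 8^{S_0} = 1 \cdot 8^{s_0}$... wait, this already needs care: $\KK_0$ is a \emph{single} cube, namely the unit cube $K_0$, which is an $S_0$-cube only if we interpret it correctly. Actually $P_0 = p_0 = 1$, so $m_0 = 8^{S_0}$, which is wrong unless $S_0 = 0$. So in fact the cleanest reading is that $\KK_0 = K_0$ is regarded as a union of $8^{S_0}$ many $S_0$-cubes (we subdivide the unit cube into $S_0$-cubes before the process starts), and then $m_0 = 8^{S_0} = P_0 \cdot 8^{S_0}$ checks out. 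For the inductive step, if $\KK_i$ is a union of $m_i = P_i \cdot 8^{S_i}$ cubes, each of them is subdivided into $8^{s_{i+1}}$ many $S_{i+1}$-cubes and exactly $p_{i+1} \cdot 8^{s_{i+1}}$ are kept, so $m_{i+1} = m_i \cdot p_{i+1} \cdot 8^{s_{i+1}} = P_i \cdot 8^{S_i} \cdot p_{i+1} \cdot 8^{s_{i+1}} = P_{i+1} \cdot 8^{S_{i+1}}$, using $S_{i+1} = S_i + s_{i+1}$ and $P_{i+1} = P_i \cdot p_{i+1}$. The measure statement follows since each $S_i$-cube has measure $8^{-S_i}$, giving $\mu(\KK_i) = m_i \cdot 8^{-S_i} = P_i$; and $\mu(\KK_\infty) = \lim P_i = \prod p_j = p$ by monotone convergence (the $\KK_i$ are nested).

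For part (b), the idea is to use the description of subcomplexes from Remark~\ref{rem-order}: once $\KK_i$ is fixed (a union of $m_i$ specific $S_i$-cubes), specifying $\KK_{i+1}$ amounts to choosing, for each of the $m_i$ cubes, which of its $8^{s_{i+1}}$ sub-$S_{i+1}$-cubes to keep, subject to keeping exactly $p_{i+1}8^{s_{i+1}}$ of them. Thus, conditioned on $\KK_i$, the number of possible values of $\KK_{i+1}$ is exactly $\binom{8^{s_{i+1}}}{p_{i+1}8^{s_{i+1}}}^{m_i}$. To turn this into a lower bound on $\logrange(\KK_{i+1})$, I would pick a value $k$ of $\KK_i$ achieving the maximum conditional range (so $|\range(\KK_{i+1})| \ge$ that max $\ge |\range(\KK_i)|^{-1}$ times... no — more simply: $|\range(\KK_{i+1})|$ is at least the number of values of $\KK_{i+1}$ compatible with that one fixed $k$, which is $\binom{8^{s_{i+1}}}{p_{i+1}8^{s_{i+1}}}^{m_i}$, but this forgets the freedom in $\KK_i$). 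The right statement: the map $\KK_{i+1} \mapsto \KK_i$ is deterministic, its fibers over any value $k \in \range(\KK_i)$ have size exactly $\binom{8^{s_{i+1}}}{p_{i+1}8^{s_{i+1}}}^{m_i}$ (a number depending only on $i+1$, not on $k$), hence $|\range(\KK_{i+1})| = |\range(\KK_i)| \cdot \binom{8^{s_{i+1}}}{p_{i+1}8^{s_{i+1}}}^{m_i}$, and therefore
\[
\logrange(\KK_{i+1}) = \logrange(\KK_i) + m_i \log\binom{8^{s_{i+1}}}{p_{i+1}8^{s_{i+1}}}.
\]
Now I apply Lemma~\ref{lem-newton} with $n = 8^{s_{i+1}}$ and $\al = p_{i+1}$ (valid since $p_{i+1}8^{s_{i+1}} \in \NN$ by Definition~\ref{s-growth}(c)): this gives $\log\binom{8^{s_{i+1}}}{p_{i+1}8^{s_{i+1}}} \ge h(p_{i+1}) 8^{s_{i+1}} - \log(8^{s_{i+1}}) = h(p_{i+1}) 8^{s_{i+1}} - 3 s_{i+1}$. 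Multiplying by $m_i = P_i 8^{S_i}$ and using $8^{S_{i+1}} = 8^{S_i} 8^{s_{i+1}}$:
\[
\logrange(\KK_{i+1}) \ge \logrange(\KK_i) + P_i \, h(p_{i+1}) \, 8^{S_{i+1}} - 3 s_{i+1} P_i 8^{S_i}.
\]
It remains to check the error term $3 s_{i+1} P_i 8^{S_i}$ is at most $S_{i+1}^2$. Since $P_i \le 1$, it suffices that $3 s_{i+1} 8^{S_i} \le S_{i+1}^2$. By Definition~\ref{s-growth}(e) we have $3 \cdot 8^{S_i} \le s_{i+1}$, so $3 s_{i+1} 8^{S_i} \le s_{i+1}^2 \le S_{i+1}^2$, and we are done.

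The main obstacle, and the only genuinely subtle point, is getting part (a)'s bookkeeping exactly right — in particular the correct interpretation that $\KK_0$ should be counted as $8^{S_0}$ cubes rather than one — and, in part (b), the observation that the fiber size of $\KK_{i+1} \mapsto \KK_i$ is independent of which value of $\KK_i$ we are over (which is what makes $\logrange$ exactly additive); everything after that is the routine entropy estimate packaged in Lemma~\ref{lem-newton} together with the growth condition~(e).
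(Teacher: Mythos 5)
Your proposal is correct and follows essentially the same route as the paper: the same induction for part (a) (including the same resolution of the $m_0 = 8^{S_0}$ bookkeeping), and for part (b) the same exact identity $\logrange(\KK_{i+1}) = \logrange(\KK_i) + m_i\log\binom{8^{s_{i+1}}}{p_{i+1}8^{s_{i+1}}}$ followed by Lemma~\ref{lem-newton} and condition (e) of Definition~\ref{s-growth}. Your explicit justification of the exact additivity via constant fiber sizes is a slightly more careful phrasing of the paper's observation that $\KK_{i+1}$ carries the same information as the pair $(\KK_i,\UU)$ with $\UU$ uniform and independent.
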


\begin{proof}
\part Let $m_i$ be the number of $S_i$-cubes in $\mathbb K_i$. Clearly $m_0 = 8^{S_0}= p_0\cdot 8^{S_0}$. The definition of $\mathbb K_{i+1}$ from $\mathbb K_i$ requires to subdivide each $S_i$-cube into $8^{s_{i+1}}$ dyadic $S_{i+1}$-cubes and choose $p_{i+1}\cdot 8^{s_{i+1}}$ of them. Thus we get $m_{i+1} = 8^{s_{i+1}}p_{i+1} m_i$. Now the result follows by induction and the fact that $\KK_\infty = \bigcap_{i\in \NN} \KK_i$ .

\part The random variable $\KK_{i+1}$ carries the same information as the pair $(\KK_i, \UU)$, where $\UU$ is a sequence of length $m_i$, whose elements are subsets of the set $\{0,\ldots, 8^{s_{i+1}}-1\}$, of size $p_{i+1} 8^{s_{i+1}}$, chosen independently and uniformly at random (see Remark~\ref{rem-order}). As such we have 
$$
    \logrange(\KK_{i+1}) = \logrange(\KK_i) + m_i\log\left({8^{s_{i+1}}\choose p_{i+1}8^{s_{i+1}}}\right).
$$
After substituting $\al=p_{i+1}$ and $n= 8^{s_{i+1}}$ in Lemma~\ref{lem-newton}, we obtain that 
\begin{align*}
    m_i\log\left({8^{s_i+1}\choose p_{i+1}8^{s_{i+1}}}\right)  &> m_i (h(p_{i+1})8^{s_{i+1}} -3 s_{i+1}) 
\\
&= P_i h(p_{i+1})8^{S_{i+1}} -3 P_i 8^{S_i} s_{i+1}.
\end{align*}
By Definition~\ref{s-growth}(e), the above is greater than
$$
    P_i h(p_{i+1})8^{S_{i+1}} -s_{i+1}^2 \ge P_i h(p_{i+1})8^{S_{i+1}} -S_{i+1}^2 ,
$$
which finishes the proof.

\trap
\end{proof}

\begin{remark}
We now aim towards estimating the logrange of the variable $\KK_i$ under the condition that $\KK_i$ contains a subcongruent $S$-cube.

First we need to make some observations about approximating elements of $\Iso(\RR^3)$. It will be convenient to use the standard representation of elements of $\Iso(\RR^3)$ as $4$-by-$4$ matrices, which we briefly recall now. If $\ga\in \Iso(\RR^3)$ then there is an orthogonal matrix $U$ and a vector $V\in \RR^3$ such that $\forall x\in \RR^3$ we have  $\ga(x) = U(x) + V$. We can consider the $4$-by-$4$ matrix $\begin{pmatrix}U & V \\0 & 1\end{pmatrix}$ and then we have 
$$
\begin{pmatrix}U & V \\0 & 1\end{pmatrix} \begin{pmatrix} x \\ 1\end{pmatrix} = \begin{pmatrix} \ga(x) \\ 1\end{pmatrix}.
$$
\end{remark}

For $n\in \NN$, let $R(n)$ be the set of those rational numbers which can be written as a fraction with denominator $ 2^{n+5}$, and whose absolute value is less than $4$. Let $\Iso(\RR^3, R(n))$ be the set of those isometries which are represented by $4$-by-$4$ matrices with coefficients in $R(n)$. Since $|R(n)| =2^{n+8}-1$, we note that 
$$
    |\Iso(\RR^3,R(n))| \le |R(n)|^{16}< 2^{16n+144}
$$

For $\ga, \de\in \Iso(\RR^3)$ let $d_1(\ga,\de)$ be the maximum of the  absolute values of the entries of the matrix $\ga-\de$. The symbol $d$ will be reserved for the Euclidean distance in $\RR^3$.

\begin{lemma}
Let $n\in \NN$, and let $\ga\in \Iso(\RR^3)$.
\part  If $\ga(K_0) \cap K_0 \neq \emptyset$ then $\exists \bar \ga \in \Iso(\RR^3, R(n))$ such that $d_1(\ga, \bar \ga) < \frac{1}{2^{n+5}}$.
\part If $\de \in \Iso(\RR^3)$ is such that $d_1(\ga, \de) < \frac{1}{2^{n+5}}$ then $\forall x\in K_0$ we have $d(\ga(x), \de(x)) < \frac{1}{4} \frac{1}{2^n}$. 
\trap
\end{lemma}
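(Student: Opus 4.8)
The lemma is a routine approximation statement, so the plan is to verify each part by direct estimates on the $4$-by-$4$ matrix representation.

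\textbf{Part (a).} First I would bound the entries of the matrix representing $\ga$, under the hypothesis $\ga(K_0)\cap K_0\neq\emptyset$. The orthogonal part $U$ has all entries in $[-1,1]$ automatically. For the translation part $V$: pick $x\in K_0$ with $\ga(x)\in K_0$, so $V=\ga(x)-U(x)$, and since $\|x\|\le\sqrt3<2$ and $\|\ga(x)\|<2$ while $U$ is orthogonal, each coordinate of $V$ has absolute value at most $\|\ga(x)\|+\|U(x)\|=\|\ga(x)\|+\|x\|<4$. Hence every entry of the $4$-by-$4$ matrix of $\ga$ lies in $(-4,4)$ (the bottom row $(0,0,0,1)$ is already of the required form, with $1=2^{n+5}/2^{n+5}\in R(n)$). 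Now I would define $\bar\ga$ by rounding each of the $16$ entries to the nearest multiple of $2^{-(n+5)}$; each rounded value still has absolute value $<4$ (rounding a number in $(-4,4)$ to the nearest $1/2^{n+5}$-multiple stays in $[-4,4)$, and one checks it lands in $R(n)$ — here the choice $|r|<4$ rather than $\le 4$ in the definition of $R(n)$, together with $|R(n)|=2^{n+8}-1$, is exactly calibrated so the rounding is well-defined), so the resulting matrix has all coefficients in $R(n)$ and satisfies $d_1(\ga,\bar\ga)<\frac12\cdot\frac{1}{2^{n+5}}<\frac{1}{2^{n+5}}$. The only subtlety is that $\bar\ga$ so defined need not be an isometry — but the statement only asserts $\bar\ga\in\Iso(\RR^3,R(n))$, which I read as the set of matrices of the stated form that happen to represent isometries; rereading the text, $\Iso(\RR^3,R(n))$ is defined as ``those isometries represented by $4$-by-$4$ matrices with coefficients in $R(n)$,'' so strictly one must exhibit an isometry. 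I expect the intended fix is that the rounding tolerance is generous enough that a genuine isometry with $R(n)$-coefficients exists within $d_1$-distance $\frac{1}{2^{n+5}}$; alternatively, and more likely given how the lemma is used downstream, the authors only ever need the weaker matrix-level statement, and I would phrase the proof to produce the rounded matrix $\bar\ga$ and note $d_1(\ga,\bar\ga)<2^{-(n+5)}$, leaving the (harmless) identification of $\Iso(\RR^3,R(n))$ with a set of matrices.

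\textbf{Part (b).} This is a clean Lipschitz estimate. Write $\ga(x)=U(x)+V$ and $\de(x)=U'(x)+V'$ with $d_1(\ga,\de)<\frac{1}{2^{n+5}}$, so all entries of $U-U'$ and all coordinates of $V-V'$ are bounded in absolute value by $\frac{1}{2^{n+5}}$. Then for $x\in K_0$, the $k$-th coordinate of $\ga(x)-\de(x)$ is $\sum_{j=1}^3 (U-U')_{kj}x_j + (V-V')_k$, whose absolute value is at most $3\cdot\frac{1}{2^{n+5}}\cdot 1 + \frac{1}{2^{n+5}} = \frac{4}{2^{n+5}}$ since $|x_j|\le1$ on the unit cube. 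Therefore $d(\ga(x),\de(x))\le\sqrt3\cdot\frac{4}{2^{n+5}} = \frac{4\sqrt3}{32}\cdot\frac{1}{2^n}<\frac14\cdot\frac{1}{2^n}$, because $4\sqrt3/32=\sqrt3/8\approx 0.2165<0.25$. This closes part (b) with room to spare.

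\textbf{Main obstacle.} There is no real analytic difficulty; the only genuine point of care is part (a) and the precise meaning of $\Iso(\RR^3,R(n))$ — specifically whether one is obliged to produce an honest isometry with dyadic-rational matrix entries, or merely a dyadic-rational matrix close to $\ga$. If the former is required, the clean fix is to observe that $\ga$ itself, having $\ga(K_0)\cap K_0\neq\emptyset$, need not have rational matrix entries at all, so one cannot avoid approximation; one then argues that isometries are dense enough among $R(n)$-matrices near $\ga$ — but since the approximation error budget $\frac{1}{2^{n+5}}$ is only a factor $2$ away from the rounding granularity, I would instead round to a finer grid internally and absorb the loss, or simply adopt the matrix-level reading, which is consistent with how such approximations are invoked in the subsequent counting argument. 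I will write the proof in the matrix-level form, since that is what the later estimates (bounding $|\Iso(\RR^3,R(n))|$ by a power of $|R(n)|$) actually exploit.
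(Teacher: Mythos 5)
Your proof is correct and takes essentially the same approach as the paper: part (a) bounds the translation vector by $3<4$ using a point $x\in K_0$ with $\ga(x)\in K_0$ and then rounds the entries to multiples of $2^{-(n+5)}$, and part (b) is the same coordinate-wise estimate (the paper bounds the distance by $\sqrt{4}\cdot 2^{-(n+3)}$ where you use $\sqrt{3}$; both suffice). The subtlety you flag in (a) --- that the entry-wise rounding of $\ga$ need not yield an honest isometry, as $\Iso(\RR^3,R(n))$ is literally defined as a set of isometries --- is a genuine point that the paper's own proof does not address (it stops after bounding the entries and never performs the rounding explicitly); your matrix-level reading is indeed all that the later argument uses, since the counting only needs the bound $|R(n)|^{16}$ on the number of candidate matrices and the subsequent lemma only invokes the entry-wise estimate of part (b).
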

\begin{proof}

\part Suppose that $V\in \RR^3$ and $U$ is the orthogonal matrix such that  $\forall x\in \RR^3$ we have $\ga(x) = U(x)+V$.  Since the matrix coefficients of $U$ are bounded by $1$ in absolute value, we only need to argue that 
the coefficients of $V$ are bounded by $4$ in absolute value.

To do this let $x\in K_0$ be such that $\ga(x) 
\in K_0$. Since $x\in K_0$, we see that $U(x)$ is contained in $[-2,2]^3$. Since $U(x) +V$ lies in 
$K_0 =[0,1]^3$, we see that all coordinates of $V$ have to be 
bounded by $3<4$ in absolute value, which finishes the proof.

\part  Let $x\in \RR^3$, and let $y\in  \RR^4$ be the vector $(x,1)$. When we act with $\de$ (represented 
as a $4$-by-$4$ matrix) on $y$ then the coefficients of the resulting 
vector differ by less than $4\cdot \frac{1}{2^{n+5}}= \frac{1}{2^{n+3}}$ 
from the respective coefficients of $\ga(y)$. Thus $d(\ga(x), \de(x))< \sqrt{4 \cdot 
(\frac{1}{2^{n+3}})^2} = \frac{1}{2^{n+2}} = 
\frac{1}{4}\frac{1}{2^{n}}$.
\trap
\end{proof}

\newop{inter}{int}
\newop{known}{\textsc known}

Let us deduce how the previous lemma can be applied to dyadic complexes. Let $n\in \NN$, $\ga\in \Iso(\RR^3)$, and let $B$ be an $n$-complex. We let $\inter_n(B)$ be the $(n+2)$-subcomplex of $B$ which consists of those $(n+2)$-cubes which are contained in the topological interior of $B$.

 We define $F_n(\ga,B)$ as the smallest $n$-complex which contains $\ga(\inter_n(B))\cap K_0$ 

\begin{lemma}\label{lem-F}
Let $n\in \NN$, $\ga, \de\in \Iso(\RR^3)$ be such that $d_1(\ga, \de) < \frac{1}{2^{n+5}}$, and let $B$ be an $n$-complex.  
\part  If $K$ is an $n$-cube contained in $F_n(\de,B)$ then $\mu(K\cap \ga(B)) >0$.
\part  If $\ga(B)\subset K_0$ then $\mu(F_n(\de,B) \cap \ga (B)) \ge \frac{1}{8}\mu (B)$.
\trap
\end{lemma}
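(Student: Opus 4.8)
The plan is to exploit that $\de$ is so close to $\ga$ (on $K_0$) that applying $\de$ instead of $\ga$ moves each point of $B$ by less than $\tfrac14\cdot\tfrac1{2^n}$, i.e.\ by less than a quarter of the side length of an $n$-cube; this is exactly the content of the second part of the preceding lemma applied on $K_0$ (after noting $\ga^{-1}$ or $\de^{-1}$ also nearly-fixes the relevant points — but in fact we only need the displacement estimate in one direction). The key point is that $\inter_n(B)$ was defined by throwing away a thin collar of width $\tfrac1{2^{n+2}}$ around the boundary of $B$, precisely so that $\de(\inter_n(B))$ still lies well inside $\ga(B)$ up to a small margin.

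For part~(a): let $K$ be an $n$-cube contained in $F_n(\de,B)$. By definition of $F_n(\de,B)$ as the smallest $n$-complex containing $\de(\inter_n(B))\cap K_0$, the cube $K$ meets $\de(\inter_n(B))$, so there is a point $y\in\inter_n(B)$ with $\de(y)\in K$. Since $y$ lies in the interior of $B$ and indeed in an $(n+2)$-cube of $\inter_n(B)$, the closed ball of radius $\tfrac1{2^{n+2}}$ around $y$ is contained in $B$; consequently the closed ball of the same radius around $\ga(y)$ is contained in $\ga(B)$ (isometry preserves the ball). Now $d(\ga(y),\de(y))<\tfrac14\cdot\tfrac1{2^n}=\tfrac1{2^{n+2}}$ by part~(b) of the previous lemma (using $y\in K_0$, which holds because $B\subset K_0$). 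Hence $\de(y)\in K$ lies within distance $\tfrac1{2^{n+2}}$ of $\ga(y)$, so a ball of positive radius around $\de(y)$ is contained in $\ga(B)$; since $\de(y)\in K$, the set $K\cap\ga(B)$ contains a nonempty open set and thus has positive measure.

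For part~(b): assume $\ga(B)\subset K_0$. First, $\mu(\inter_n(B))\ge\tfrac18\mu(B)$: passing from $B$ to $\inter_n(B)$ we keep, in each $n$-cube $Q$ of $B$, all the $(n+2)$-cubes except those touching $\partial Q$ from inside — but a cleftover $n$-cube of $B$ need not keep any of its $(n+2)$-subcubes if none lies in the interior of $B$, so this bound needs the structure of $B$ as an $n$-complex; the clean statement is rather that $\inter_n(B)\supseteq$ the union, over $n$-cubes $Q\subseteq B$, of the concentric $(n+2)$-subcube of $Q$ of the same center scaled by $\tfrac12$, which has measure $\tfrac18\mu(Q)$, and such a subcube lies in the interior of $B$ whenever $Q\subseteq B$. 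Summing over $Q$ gives $\mu(\inter_n(B))\ge\tfrac18\mu(B)$. Next, $F_n(\de,B)\supseteq\de(\inter_n(B))\cap K_0$, and since $\de$ is an isometry, $\mu(\de(\inter_n(B)))=\mu(\inter_n(B))\ge\tfrac18\mu(B)$; moreover $\de(\inter_n(B))\subseteq K_0$ because by part~(b) of the previous lemma it is within $\tfrac1{2^{n+2}}$ of $\ga(\inter_n(B))\subseteq\ga(B)\subseteq K_0$ — wait, this only shows it is within a small neighbourhood of $K_0$, so I instead argue that $\de(\inter_n(B))\cap K_0$ already captures $\ga(\inter_n(B))$ up to measure zero: every point of $\ga(\inter_n(B))$ that lies in the open interior $(0,1)^3$ at distance $>\tfrac1{2^{n+2}}$ from $\partial K_0$ has its $\de$-image in $K_0$, and the exceptional collar has measure controlled. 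The cleanest route is: $\mu(F_n(\de,B)\cap\ga(B))\ge\mu(\de(\inter_n(B))\cap K_0\cap\ga(B))=\mu(\de(\inter_n(B))\cap\ga(B))$ (as $\ga(B)\subseteq K_0$), and by part~(a) every $n$-cube of $F_n(\de,B)$ meets $\ga(B)$ in positive measure, but to get the $\tfrac18$ bound I compare volumes directly: $\de(\inter_n(B))$ has measure $\ge\tfrac18\mu(B)$, and each of its points is within $\tfrac1{2^{n+2}}$ of $\ga(\inter_n(B))\subseteq\ga(B)$, hence lies in $F_n(\de,B)$ (as $F_n$ rounds out to the enclosing $n$-complex) and, being in the $\tfrac1{2^{n+2}}$-neighbourhood of $\ga(B)$ while also, by the interior-ball argument of part~(a), actually inside $\ga(B)$, we get $\de(\inter_n(B))\subseteq F_n(\de,B)\cap\ga(B)$, whence $\mu(F_n(\de,B)\cap\ga(B))\ge\mu(\de(\inter_n(B)))\ge\tfrac18\mu(B)$.

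The main obstacle is bookkeeping the constants: one must check that the displacement $\tfrac1{2^{n+2}}$ is small enough relative to both the $(n+2)$-collar removed in forming $\inter_n$ and the $n$-cube side $\tfrac1{2^n}$, and that the ``round out to an $n$-complex'' in the definition of $F_n$ does not spoil the containment $\de(\inter_n(B))\subseteq\ga(B)$ — it does not, because each point $\de(y)$ is genuinely inside $\ga(B)$ (not merely inside $F_n(\de,B)$), by the interior-ball argument; the rounding of $F_n$ only ever enlarges the set, so it is harmless for the lower bound. I expect part~(a) to be essentially immediate once the interior-ball-plus-displacement picture is set up, and part~(b) to reduce to the volume estimate $\mu(\inter_n(B))\ge\tfrac18\mu(B)$ together with the isometry-invariance of measure.
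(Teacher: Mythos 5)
Your argument is correct and follows essentially the same route as the paper's: part (a) via the observation that a point of $\de(\inter_n(B))$ lying in $K$ is an interior point of $\ga(B)$ (because the displacement $d(\ga(y),\de(y))<\frac{1}{2^{n+2}}$ is beaten by the collar removed in forming $\inter_n(B)$), and part (b) via the inclusion $\de(\inter_n(B))\subset F_n(\de,B)\cap\ga(B)$ together with $\mu(\inter_n(B))\ge\frac18\mu(B)$ and isometry-invariance of $\mu$. Incidentally, your count of $8$ interior $(n+2)$-subcubes out of $64$ per $n$-cube is the correct one; the paper's stated fraction $\frac{27}{64}$ is a harmless slip, since only the bound $\ge\frac18$ is used.
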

\begin{proof}
\part  If $K$ is contained in $F_n(\de,B)$, then $\exists x\in K \cap \de(\inter_n(B))$. By the second item of the previous lemma, we have  that $\de (\inter_n(B))$ is contained in the topological interior of $\ga(B)$, and hence for some ball $C$ containing  $x$ we have $C\subset \ga(B)$. Clearly $\mu(C\cap K)>0$, which finishes the proof of (a).

\part By the second item of the previous lemma, $\de(\inter_n(B)) \subset \ga(B)$. By assumption, $\ga(B)\subset K_0$, hence $\de(\inter_n(B))\subset K_0$. It follows that $\de(\inter_n(B))\subset F_n(\de,B)$ and hence
\begin{equation}\label{eq-got39}
    \de(\inter_n(B))\subset F_n(\de,B) \cap \ga(B).
\end{equation}
For any $n$-cube $L$ we have $\mu(\inter_n(L)) = \frac{27}{64}\mu(L)$, so $\mu(\inter_n(B)) \ge  \frac{27}{64}\mu(B) \ge \frac{1}{8}\mu(B)$. This, together with~\eqref{eq-got39} finishes the proof of (b).
\trap
\end{proof}

Let us fix $S\in \NN$ for the rest of this section, and let $C_i$ be the condition  
\begin{center}
``\text{$\mathbb K_i$ contains a subcongruent $S$-cube}'',
\end{center} and let   $\MM_i := \KK_i|C_i$.  We are now ready for the main result needed in the proof of Theorem~\ref{thm-subcongruent}: estimation from above of the logrange of the random variable $\MM_i$.

\begin{proposition}\label{prop-entropy-small}
There exists $i_0\in \NN$ which depends only on $p$ and $S$,  such that for  $i>i_0$  we have
$$
\logrange (\MM_{i+1}) \le \logrange (\MM_{i})  +  8^{S_{i+1}}\cdot P_{i} \cdot h(p_{i+1})- 6^{S_{i+1}}.
$$
\end{proposition}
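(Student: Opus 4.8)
The strategy is a conditional-counting argument: conditioned on $C_{i+1}$, once we fix the conditioning data at level $i$ together with a small amount of extra ``overhead'' information (which isometry witnesses subcongruence, which $S$-cube is subcongruent), the number of choices of $\KK_{i+1}$ is sharply smaller than it would be without conditioning, because the subcongruent $S$-cube $A$ forces the part of $\KK_{i+1}$ inside $\ga(A)$ to be determined by the part inside $A$. We must turn ``$\logrange$'' into something we can add, so I would decompose $\logrange(\MM_{i+1})$ as $\logrange(\MM_i)$ plus the logarithm of the number of ways to refine a given $\MM_i$-value to an $\MM_{i+1}$-value, and bound the latter.

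Here is the order of steps I would carry out. \emph{Step 1: reduce to a single isometry.} The condition $C_{i+1}$ says there is \emph{some} subcongruent $S$-cube $A$ and \emph{some} $\ga \in \Iso(\RR^3)\setminus\{\Id\}$ with $A\cap\ga(A)=\emptyset$ and $\ga(A\cap\KK_{i+1})\subset\KK_{i+1}$. Since $A\cap\KK_{i+1}\neq\emptyset$ forces $A\cap K_0$ and $\ga(A)\cap K_0$ both nonempty, apply the approximation Lemma to get $\bar\ga\in\Iso(\RR^3,R(n))$ with $d_1(\ga,\bar\ga)<2^{-(n+5)}$, where $n:=S_{i+1}$. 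The key consequence, via Lemma~\ref{lem-F}: if we also record which $S$-cube $A$ is (at most $\log(8^{S_i})=3S_i$ bits, since there are $8^{S_i}$ $S$-cubes) and which element $\bar\ga$ of $\Iso(\RR^3,R(n))$ we use (at most $16n+144$ bits), then knowing the restriction of $\KK_{i+1}$ to $A$ determines a large fraction of the restriction of $\KK_{i+1}$ to the $S$-complex $F_n(\bar\ga,A)$: indeed, every $n$-cube in $F_n(\bar\ga,A)$ that meets $\ga(A)$ must meet $\KK_{i+1}$, and more precisely the constraint $\ga(\inter_n(A)\cap\KK_{i+1})\subset\KK_{i+1}$ copies information. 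Here $n=S_{i+1}$, so $n$-cubes are exactly the cubes at the ``new'' level, which is what we want to count.

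\emph{Step 2: count refinements.} Given a value of $\MM_i$ (equivalently $\KK_i$ subject to $C_{i+1}$, but $C_{i+1}$ does not constrain $\KK_i$ much), the refinements to $\KK_{i+1}$ are described, as in Proposition~\ref{prop-entropy}(b), by a choice, for each of the $m_i = P_i 8^{S_i}$ $S_i$-cubes $L$ of $\KK_i$, of a size-$p_{i+1}8^{s_{i+1}}$ subset of its $8^{s_{i+1}}$ $S_{i+1}$-subcubes. Without conditioning this gives $\binom{8^{s_{i+1}}}{p_{i+1}8^{s_{i+1}}}^{m_i}$ choices, whose log is $\le 8^{S_{i+1}}P_i h(p_{i+1})$ by Lemma~\ref{lem-newton} — this already matches the first two terms of the claimed bound. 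The extra saving $-6^{S_{i+1}}$ must come from the $S_i$-cubes $L$ that lie inside $\ga(A)$: for those, a positive fraction of the $S_{i+1}$-subcubes of $L$ is forced by the (already chosen) subcubes inside $A$. Quantitatively: $A$ is an $S$-cube, so it contains $8^{S_i-S}$ $S_i$-cubes; by Definition~\ref{s-growth}(e) we have $s_{i+1}\ge 3\cdot 8^{S_i}$, so each such $S_i$-cube contributes genuinely new degrees of freedom, and inside $F_n(\bar\ga,A)$, Lemma~\ref{lem-F}(b) guarantees that at least $\tfrac18\mu(A)$-worth of the $S_{i+1}$-cubes of $\KK_{i+1}$ inside $\ga(A)$ are pinned down. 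One then replaces, for the relevant $S_i$-cubes $L$, the factor $\binom{8^{s_{i+1}}}{p_{i+1}8^{s_{i+1}}}$ by the much smaller $\binom{(1-\alpha)8^{s_{i+1}}}{(p_{i+1}-\alpha)8^{s_{i+1}}}$ where $\alpha$ is the forced fraction; Lemma~\ref{lem-ediff} converts this into an additive loss of roughly $(1-p_{i+1})\alpha \cdot 8^{s_{i+1}}$ per such cube. Using Definition~\ref{s-growth}(d), $(1-p_{i+1})8^{S_i}\ge 7^{S_i}$, one checks the total loss over all the $\approx 8^{S_i-S}$ relevant $S_i$-cubes dominates $6^{S_{i+1}}$ for $i$ large enough (depending only on $p$ and $S$, since $S$ and $p$ control $\alpha$ and the crossover between $7^{S_i}$-type and $6^{S_i}$-type quantities), which is where $i_0$ enters. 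Finally one must add back the overhead $3S_i + 16S_{i+1}+144$ bits for naming $A$ and $\bar\ga$; this is polynomial in $S_{i+1}$ and hence negligible against the exponential saving $6^{S_{i+1}}$, again for $i>i_0$.

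\emph{Main obstacle.} The delicate point is Step~2: making precise exactly how many $S_{i+1}$-subcubes inside the $S_i$-cubes of $\ga(A)$ are \emph{forced}, and organizing the count so that the forced subcubes are counted against a genuinely independent block of the product $\prod_L\binom{8^{s_{i+1}}}{\cdot}$. One has to be careful that the forcing is via $\bar\ga$ (a fixed, named isometry) rather than the unknown $\ga$, using Lemma~\ref{lem-F} to transfer between them at the cost of passing to interiors $\inter_n$, and that the image $F_n(\bar\ga,A)$ might spill across several $S_i$-cubes of $\KK_i$ in a way compatible with the product structure. Bookkeeping the constants so that $6^{S_{i+1}}$ (rather than, say, $7^{S_{i+1}}$) is exactly what survives — after subtracting the $O(S_{i+1})$ naming overhead and accounting for the $\tfrac18$ loss and the gap between $s_{i+1}$ and $S_{i+1}=S_i+s_{i+1}$ — is the decisive calculation, and it is engineered precisely by conditions (d) and (e) of Definition~\ref{s-growth}.
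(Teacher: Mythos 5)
Your proposal is correct and follows essentially the same route as the paper's proof: encode each conditioned outcome by the tuple (previous-level set, witness $S$-cube, rational approximation $\bar\ga\in\Iso(\RR^3,R(S_{i+1}))$ of the witnessing isometry, the part inside the cube, the part outside), observe via Lemma~\ref{lem-F} that the $S_{i+1}$-cubes of $F_{S_{i+1}}(\bar\ga,\cdot)$ are forced and convert that forced density into an entropy saving with Lemma~\ref{lem-ediff}, then absorb the polynomial naming overhead into the exponential saving using Definition~\ref{s-growth}(d),(e). The one slip --- counting $8^{S_i}$ rather than $8^{S}$ candidates for the subcongruent $S$-cube --- is harmless, since the overhead remains polynomial in $S_{i+1}$.
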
 

\begin{proof}
\newop{UU}{\mathbb U}
\newop{prev}{prev}

We proceed in a similar way as in the proof of Proposition~\ref{prop-entropy}, 
in that we express $\MM_{i+1}$ as a function of a $\MM_i$ and some additional data.

We start by  associating to every element $M$ in the range 
of  $\MM_{i+1}$  a pair  $(A_M,\ga_M)$ which witnesses that $M$ 
contains  a subcongruent $S$-cube, i.e.~$A_M$ is an $S$-cube with 
$\mu(M\cap A_M)>0$ and $\ga_M\in \Iso(\RR^n)$ is such that 
$\ga_M(A_M)\cap A_M=\emptyset$ and $\ga_M(M\cap A_M)\subset M$. 
Furthermore, let $\prev(M)$ be the unique element in the range 
of $\MM_i$ from which $M$ arises, and let $\bar \ga_M\in 
\Iso(\RR^3,R(S_{i+1}))$ be such that $d(\ga_M,\bar \ga_M) \le \frac{1}{2^{S_{i+1}+5}}$.

Now we note that $M$ can be recovered from the tuple
\begin{equation}\label{eq-tup}
    (\prev(M), A_M, \bar \ga_M, M\cap A_M, M\setminus A_M),
\end{equation}
since clearly $M = (M\cap A_M) \cup (M \setminus A_M))$. In order to estimate the logrange of $\MM_{i+1}$, we proceed to bound from above the number of such tuples, as follows. In the following claim, we fix the first 4 elements in~\eqref{eq-tup}, and we estimate the number of elements which we can put in the fifth place. Afterwards, we will estimate the number of possibilities for the first 4 elements in~\eqref{eq-tup}. These two bounds will lead to the desired result.

\begin{claim} Let $A\in \range (\MM_i)$, let $B$ be an $S$-cube, let $\ga\in \Iso(\RR^3,R(S_{i+1}))$, and let $D$ be an $S_{i+1}$-complex contained in $B$. Then the logarithm of the number 
of all dyadic $S_{i+1}$-complexes $E$ such that for some $M\in \range(\MM_{i+1})$ we have that 
$$
(A,B,\ga,D, E) = (\prev(M),A_M,\bar \ga_M, M\cap A_M, M\setminus A_M))
$$
is bounded from above by
\begin{equation}\label{eq-ssa}
    8^{S_{i+1}}\cdot \mu(\prev(M)\setminus A_M) \cdot h(p_{i+1})- \frac{p_{i+1}}{8^{S+1}} \cdot 8^{S_{i+1}}(1-p_{i+1})
\end{equation}
\end{claim}
\begin{proof}[Proof of Claim]
Given $(\prev(M), A_M,\bar \ga_M, M\cap A_M)$, in order to describe $M\setminus A_M$ we proceed in a similar fashion as in Proposition~\ref{prop-entropy}. 

 The crucial difference is that when we choose which $S_{i+1}$-cubes should be in $M\setminus A_M$, we can start by including all $S_{i+1}$-cubes which are contained in $F_{S_{i+1}}(\bar \ga_M,M\cap A_m)$, and the latter set is determined by $(B,\ga, D)$. This observation is what leads to the large negative term in~\eqref{eq-ssa}.

For any $S_i$-cube $K$  we let 
$$
    \al(K) := \mu(F_{S_{i+1}}(\bar \ga_M, M\cap A_M) \cap K) \cdot 8^{S_i}
$$
In other words, $\al(K)$ is the ``density'' of $F_{S_{i+1}}(\bar \ga_M, M\cap A_M)$ in $K$. Informally, the number $\al(K)8^{s_{i+1}}$ represents those $S_{i+1}$-cubes of $K$ which we must choose in every $E$, i.e. we can ``deduce the presence of those dyadic cubes from the information contained in $(A,B,\ga,D)$''.   

Since $\ga(M\cap A_M) \subset M\setminus A_M$, we have

\begin{align*}
\sum_{\substack{K\subset M\setminus A_M\\ \text{$K$ is an $S_i$-cube}}} \al(K) &= \mu(F_{S_{i+1}}(\bar \ga_M,  M\cap A_M) \cap (M\setminus A_M))  \cdot 8^{S_i} 
\\
&\ge \mu(F_{S_{i+1}}(\bar \ga_M,  M\cap A_M) \cap \ga_M(M\cap A_M))  \cdot 8^{S_i},
\end{align*}
which by the second item of Lemma~\ref{lem-F} is greater or equal to 
$$
\frac{1}{8}\mu(M\cap A_M)  \cdot 8^{S_i}
\ge \frac{p_{i+1}}{8^{S+1}}\cdot 8^{S_i}.
$$

Note that there are  $m_i' := \mu(M\setminus A_M)\cdot 8^{S_i}$ dyadic $S_i$-cubes contained in $M\setminus A_M$. In order to describe $M\setminus A_M$  we need a sequence of length $m_i'$,  such that the element corresponding to a given $S_i$-cube $K$ contained in $M\setminus A_M$ is a subset of cardinality $(p_{i+1}-\al(K))8^{s_{i+1}}$ of the set \{0,1,\ldots, $(1-\al(K))8^{s_{i+1}}-1\}$ (see Remark~\ref{rem-order}). As such there are at most 
\begin{equation}\label{eq-gg}
\prod_{j < m_i'} {(1-\al_j)8^{s_{i+1}} \choose (p_{i+1}-\al_j)8^{s_{i+1}}}.
\end{equation}
possibilities for the choice of $E$, where $(\al_j)$ is some sequence of length $m_i'$ with $\sum_{j<m_i'} \al_j \ge \frac{p_{i+1}}{8^{S+1}} 8^{S_i}$.

By Lemma~\ref{lem-newton}, the logarithm of~\eqref{eq-gg} is bounded from above by
\begin{equation}\label{eq-pl}
    \sum_{j<m_i'} h\left(\frac{p_{i+1}-\al_j}{1-\al_j}\right)(1-\al_j) 8^{s_{i+1}}
\end{equation}

Putting $q=p_{i+1}$, $\alpha=\alpha_j$  in Lemma~\ref{lem-ediff}, and summing over $j<m_i'$, we see that~\eqref{eq-pl} is bounded from above by
\begin{align*}
    8^{s_{i+1}}&\left(m_i' h(p_{i+1})- \sum_{j<m_i'} \alpha_j(1-p_{i+1}) \right) 
\\
 &\le     8^{s_{i+1}}\left(m_i' h(p_{i+1})- \frac{p_{i+1}}{8^{S+1}} \cdot 8^{S_i}(1-p_{i+1}) \right) 
\\
&= 8^{s_{i+1}}\cdot m_i' h(p_{i+1})- \frac{p_{i+1}}{8^{S+1}} \cdot 8^{S_{i+1}}(1-p_{i+1}).
\end{align*}
Since $m_i' = \mu(M\setminus A_M)\cdot 8^{S_i} < \mu(\prev(M)\setminus A_M)\cdot 8^{S_i}$, this proves the claim.
\end{proof}

\begin{claim} The logarithm of the  number of tuples $(A,B,\ga, D)$ such that for some $M\in \range(\MM_{i+1})$ we have 
$$
    (A,B,\ga, D) = (\prev(M),A_M,\bar \ga_M, M\cap A_M)
$$
is bounded from above by
$$
    \logrange(\MM_i) + 8^{S_{i+1}}\mu(\prev(M)\cap A_M) \cdot  h(p_{i+1})  +  16 S_{i+1} + S + 144
$$
\end{claim}
\begin{proof}
Since $A\in \range(\MM_i)$, we have at most $|\range(\MM_i)|$ possibilities for it. Since $A_M$ is an $S$-cube, we have at most $8^S$ possibilities for it, and since $\bar\ga_M\in \Iso(\RR^3,R(S_{i+1}))$, we have $2^{16S_{i+1}+144}$ possibilities for it.

Finally to describe $D$, we proceed exactly as in Proposition~\ref{prop-entropy}, i.e.~$A_M\cap \prev(M)$ consists of $\mu(A_M\cap\prev(M))\cdot 8^{S_i}$ dyadic $S_i$-cubes, and so we only need a sequence of length $\mu(A_M\cap\prev(M))8^{S_i}$, whose each element is a subset of cardinality $ p_{i+1}8^{s_{i+1}}$ of the set $8^{s_{i+1}}$. Thus, for a fixed $A,B$ there are at most 
$$
 {8^{s_{i+1}}\choose p_{i+1}8^{s_{i+1}}}^{\mu(A_M\cap\prev(M))8^{S_i}}
$$
possibilities for $D$. Multiplying all the relevant factors together and taking the logarithm of the result we obtain 
$$\logrange(\MM_i)+S+16S_{i+1}+144+\mu(\prev(M)\cap A_M)8^{S_i}\log {8^{s_{i+1}}\choose p_{i+1}8^{s_{i+1}}},$$
which by Lemma~\ref{lem-newton} does not exceed
$$
    \logrange(\MM_i) + 8^{S_{i+1}}\mu(\prev(M)\cap A_M) \cdot  h(p_{i+1})  +  16 S_{i+1} + S + 144,
$$
which finishes the proof of the claim.
\end{proof}

Note that for any $M\in\range(\MM_{i+1})$ we have $\mu(\prev(M)) = P_i$. As such, the previous two claims show together that the logarithm of the  number of possibilities for the tuples as in~\eqref{eq-tup} is bounded from above by 
$$
    \logrange(\MM_i) + 8^{S_{i+1}}\cdot P_i \cdot  h(p_{i+1}) + 16 S_{i+1} + S + 144 - \frac{p}{2^{S+1}} \cdot 8^{S_{i+1}}(1-p_{i+1}).
$$

By Definition~\ref{s-growth}(d), we have 
$$
    16 S_{i+1} + S + 144 - \frac{p}{2^{S+1}} \cdot 8^{S_{i+1}}(1-p_{i+1}) \le  16 S_{i+1} + S + 144 - \frac{p}{2^{S+1}} \cdot 7^{S_{i+1}}.
$$ 
Since  $(S_i)$ is an increasing sequence, we see that there exists $i_0$ such that for $i>i_0$ we have 
$$
 16 S_{i+1} + S + 144 - \frac{p}{2^{S+1}} \cdot 7^{S_{i+1}} \le - 6^{S_{i+1}}
$$
Since $\logrange(\MM_{i+1})$ is bounded from above by the number of tuples as in~\eqref{eq-tup}, we see that all in all for $i>i_0$ we have 
$$
\logrange(\MM_{i+1})  \le   \logrange(\MM_i) + 8^{S_{i+1}}\cdot P_i \cdot  h(p_{i+1}) - 6^{S_{i+1}},
$$
which finishes the proof.
\end{proof}

We have now everything in place for the proof of Theorem~\ref{thm-subcongruent}.

\begin{proof}[Proof of Theorem~\ref{thm-subcongruent}]
Recall that $S\in \NN$ is fixed, furthermore $C_{i+1}$ is the condition  
\begin{center}
``\text{$\mathbb K_{i+1}$ contains a subcongruent $S$-cube}'', 
\end{center}
and we defined $\MM_{i+1} = \KK_{i+1}|C_i$.  

We have
$$
\Pr(C_{i+1})  = \frac{|\range(\MM_{i+1})|}{|\range(\KK_{i+1})|} = 2^{\logrange(\MM_{i+1})-\logrange(\KK_{i+1})}.
$$
 By Proposition~\ref{prop-entropy-small} there exists $i_0$ such that for $i>i_0$ we have
\begin{align*}
\logrange (\MM_{i+1}) &\le \logrange (\MM_{i})  +  8^{S_{i+1}}\cdot P_{i} \cdot h(p_{i+1})- 6^{S_{i+1}}
\\
&\le \logrange (\KK_{i})  +  8^{S_{i+1}}\cdot P_{i} \cdot h(p_{i+1})- 6^{S_{i+1}}.
\end{align*}
By Proposition~\ref{prop-entropy}, for all $i\in \NN$ we have 
$$
    \logrange (\KK_{i+1}) \ge \logrange (\KK_{i})  +  8^{S_{i+1}} \cdot P_{i}\cdot  h(p_{i+1})  -S_{i+1}^2.
$$
As such,  we have
$$
\logrange(\MM_{i+1})-\logrange(\KK_{i+1}) \le -6^{S_{i+1}} + S_{i+1}^2,
$$
which is less than $-5^{S_{i+1}}$ for large enough $i$. This finishes the proof.
\end{proof}

\section{Applications to domains of expansion and equidecomposability}\label{sec-app}

In this section we prove Theorems~\ref{intro-thm1} and~\ref{intro-thm2}. Let $\cal R$ be the subset of $\range( (\KK_i)_{i\in \NN} )$ consisting of those sequences $(M_i)$ such that for every $S\in \NN$ there exists $i_0\in \NN$ such that for $i\ge i_0$ there are no subcongruent $S$-cubes in $M_i$. By Theorem~\ref{thm-subcongruent}, $\cal R$ has full measure. 

For subsets $X$ and $Y$ of $\RR^3$ we shall write $X=^*Y$ if $\mu(X \triangle Y) = 0$, where $\triangle$ denotes the symmetric difference of sets. We write $X\ne^*Y$ if $\neg(X=^*Y)$. %and $X \subset^* Y$ if and only if $\mu(X\setminus Y)=0$.

\begin{lemma}\label{lem-key}
For all $(M_i)\in\cal R$, all dyadic complexes $A$ such that $A \cap M_\infty \ne^*\emptyset$, and all finite $T\subset\Iso(\RR^3)\setminus \{\Id\}$ there exists a dyadic cube $K\subset A$ and $j\in\NN$ such that $K\cap M_\infty\ne^*\emptyset$ and $\forall \de\in T$ we have  $M_j \cap \de(K) = \emptyset$. In particular, for all $k\ge j$ we have $M_k \cap \de(K) = \emptyset$. 
\end{lemma}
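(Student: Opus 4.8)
The plan is to strip the statement down, by a chain of elementary reductions, to a single assertion about $M_\infty$ and one isometry, and then to contradict $(M_i)\in\cal R$.

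\emph{Reductions.} Since $A$ is a dyadic complex with $\mu(A\cap M_\infty)>0$, one of its dyadic cubes already carries positive $M_\infty$-measure, so we may take $A$ to be a single dyadic cube. Next it suffices to treat $|T|=1$: writing $T=\{\de_1,\dots,\de_m\}$ and $K_0:=A$, once we have a dyadic cube $K_\ell\subseteq K_{\ell-1}$ with $\mu(K_\ell\cap M_\infty)>0$ and $\de_\ell(K_\ell)\cap M_\infty=\emptyset$, the inclusion $K_{\ell+1}\subseteq K_\ell$ automatically preserves $\de_\ell(K_{\ell+1})\cap M_\infty=\emptyset$; after $m$ steps, compactness of each $\de(K_m)$ together with $M_j\searrow M_\infty$ yields a common $j$ (the largest of finitely many). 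So it is enough to show: for every dyadic cube $A$ with $\mu(A\cap M_\infty)>0$ and every $\de\in\Iso(\RR^3)\setminus\{\Id\}$ there is a dyadic cube $K\subseteq A$ with $\mu(K\cap M_\infty)>0$ and $\de(K)\cap M_\infty=\emptyset$.

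\emph{Rephrasing.} The set $\de^{-1}(M_\infty)$ is closed — indeed nowhere dense, being a homeomorphic image of $M_\infty$, which is closed with empty interior since at every level some dyadic subcube is discarded — so its complement is open. A dyadic cube $K\subseteq A$ contained in $\RR^3\setminus\de^{-1}(M_\infty)$ with $\mu(K\cap M_\infty)>0$ exists if and only if the open set $\inter(A)\setminus\de^{-1}(M_\infty)$ carries positive $M_\infty$-measure, i.e.\ if and only if $\mu\big((M_\infty\cap A)\setminus\de^{-1}(M_\infty)\big)>0$. Thus the whole lemma reduces to proving
\[
\mu\big((M_\infty\cap A)\setminus\de^{-1}(M_\infty)\big)>0\quad\text{for every dyadic cube }A\text{ with }\mu(A\cap M_\infty)>0\text{ and every }\de\ne\Id .
\]

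\emph{Contradiction with $\cal R$.} Suppose this fails, so $\de(x)\in M_\infty$ for a.e.\ $x\in M_\infty\cap A$, with $\de\ne\Id$. I would pass to a Lebesgue density point $x_0\in\inter(A)$ of $M_\infty\cap A$, chosen (a.e.\ such point works) so that $\de(x_0)$ is again a density point of $M_\infty$, $\de(x_0)\ne x_0$, and both $x_0,\de(x_0)$ avoid all dyadic hyperplanes and moreover sit well inside their dyadic $S_i$-cubes for a positive-density set of scales $i$. For such $i$, the $S_i$-cube $Q_i\ni x_0$ lies in $M_i$, the rotated cube $\de(Q_i)$ is filled by $M_\infty$ (hence by $M_i$) up to a fraction $o(1)$, and the density of $M_\infty$ at $\de(x_0)$ forces every $S_i$-cube that $\de(Q_i)$ meets in non-negligible measure to lie in $M_i$; using the $\inter_n$/$F_n$ inner-fattening of Section~\ref{sec-subcongruent} (Lemma~\ref{lem-F}) to absorb the residual boundary slivers, and fixing a scale by setting $B:=Q_{i_0}$ for one large $i_0$ and then examining $B$ at all later levels, one concludes that $B$ is a subcongruent $S$-cube in $M_i$ — witnessed by $\ga=\de$, with $B\cap\de(B)=\emptyset$ for $i_0$ large — for infinitely many $i$, contradicting $(M_i)\in\cal R$.

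The hard part is precisely this last step. The tension is a mismatch of shapes and scales: the hypothesis only provides an \emph{approximate/almost-everywhere} inclusion $\de\big(M_\infty\cap(\text{fine cube})\big)\subseteq M_\infty$, whereas "subcongruent" requires the \emph{exact} inclusion $\ga(B\cap M_i)\subseteq M_i$, and requires it at a scale $S$ independent of $i$, while the scale on which the approximation is good is $S_i\to\infty$. Reconciling these — pinning down which axis-aligned dyadic $S_i$-cubes a non-aligned rotated cube $\de(Q)$ can enter, showing those are kept in $M_i$, and arranging a single fixed-scale witness good for infinitely many $i$ — is where the geometric care of Section~\ref{sec-subcongruent}, especially Lemma~\ref{lem-F} and the careful choice of the density point, is needed.
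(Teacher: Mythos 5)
Your reductions are sound: collapsing $A$ to a single dyadic cube, handling $T$ one isometry at a time, and recovering the finite index $j$ by compactness of $\de(K)$ against the descending compacta $M_i$ all work, and they bring you to the claim that $\mu\bigl((M_\infty\cap A)\setminus\de^{-1}(M_\infty)\bigr)>0$ for every $\de\ne\Id$. The genuine gap is the last step, and you have essentially diagnosed it yourself: the negation you must refute is an \emph{almost-everywhere} containment of the limit set $M_\infty$, while membership in $\cal R$ only forbids an \emph{exact} containment $\ga(B\cap M_i)\subset M_i$ of the finite-level complex, for a cube $B$ of a fixed scale. Your density-point sketch does not bridge this. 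In particular, the assertion that the density of $M_\infty$ at $\de(x_0)$ ``forces every $S_i$-cube that $\de(Q_i)$ meets in non-negligible measure to lie in $M_i$'' is false as stated: density $1$ at a point only bounds the \emph{measure} of the exceptional set, and a single point of $B\cap M_i$ whose $\de$-image lands outside $M_i$ already destroys subcongruence. Moreover nothing in the a.e.\ hypothesis about $M_\infty$ controls the part $B\cap(M_i\setminus M_\infty)$, which the subcongruence condition also quantifies over. So the contradiction with $(M_i)\in\cal R$ is not established, and that is the entire content of the lemma.

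The paper takes a different route that never converts the hypothesis into a statement about $M_\infty$. It inducts on $|T|$, and at each step picks a dyadic cube $K'$ with $\mu(K'\cap M_\infty)>0$ and $K'\cap\ga(K')=\emptyset$, then invokes $(M_i)\in\cal R$ directly at a finite level: for large $k$ the cube $K'$ is not subcongruent in $M_k$, hence $\ga(K'\cap M_k)\setminus M_k\ne^*\emptyset$ already for the $S_k$-complex $K'\cap M_k$, and from the resulting relatively open set one extracts a smaller dyadic cube $K''\subset K'$ with $\mu(K''\cap M_\infty)>0$ and $\ga(K'')\cap M_k=\emptyset$, which is exactly the form of conclusion the lemma asks for (avoidance of $M_j$ for a finite $j$, not of $M_\infty$). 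In short, the non-subcongruence hypothesis is consumed in the form in which Theorem~\ref{thm-subcongruent} delivers it, an exact non-containment of finite-level complexes, rather than being reconstructed from a limit statement by a density argument. To salvage your approach you would have to prove your displayed inequality, and the only available tool for that is this same finite-level non-subcongruence; at that point you would be rederiving the paper's inductive step.
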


\begin{proof}
The proof is by induction on $n=|T|$.

Let us fix $(M_i)\in\cal R$ and a dyadic complex $A$ with $A \cap M_\infty \ne^*\emptyset$. Let $n=0$, 
i.e.~we have $T=\emptyset$. Since $A \cap M_\infty \ne^*\emptyset$, there exists a dyadic cube $K\subset A$ with $K \cap M_\infty \ne^*\emptyset$. The second condition is vacuously true for arbitrary $j$.

Let us suppose now that the statement holds for some $n\ge 0$. Let $T\subset\Iso(\RR^3)\setminus\{\Id\}$ be a subset of cardinality $n+1$. Let us write $T=U\cup \{\ga\}$ for some $U$ with $|U|=n$. 
By the inductive hypothesis, there exists a dyadic cube $K\subset A$ and $j\in \NN$ such that $K\cap M_\infty\ne^*\emptyset$ and $M_j \cap \de(K) =\emptyset$ for all $\de\in U$.

Let us choose a dyadic cube $K'\subset K$ such that $K'\cap \ga(K') =\emptyset$ and $K'\cap M_\infty\neq^* \emptyset$. Since $(M_i)\in \cal R$, for sufficiently large $k$ we have that $K'$ is not subcongruent in $M_k$. Fix such a number $k>j$. Then $\ga(K'\cap M_k)\setminus M_k \ne^* \emptyset$. Therefore we can choose a dyadic cube $K''\subset K'$ such that $K'' \cap M_\infty\neq^* \emptyset$ and $\ga(K'')\cap M_k =\emptyset$. Then $K''$ and $k$ witness that the inductive statement holds for $T$.

The statement ``In particular...'' follows from the fact that $(M_i)$ is a descending sequence of sets. This finishes the proof.
\end{proof}

The following theorem has Theorem~\ref{intro-thm2} as a corollary.

\begin{theorem}
For every $(M_i)\in\cal R$ the set $M_\infty$ is a compact set of positive measure which is not a domain of expansion.
\end{theorem}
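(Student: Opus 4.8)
The plan is to read the statement off Lemma~\ref{lem-key} with essentially no extra work. First I would note that $M_\infty = \bigcap_i M_i$ is an intersection of dyadic complexes, each a finite union of closed cubes and hence compact, so $M_\infty$ is compact; it is contained in $K_0$, hence bounded; and $\mu(M_\infty) = p > 0$ by Proposition~\ref{prop-entropy}(a). So $M_\infty$ is a bounded non-negligible compact set, and it makes sense to ask whether it is a domain of expansion.

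To show it is not, I would fix an arbitrary finite $S\subset\Iso(\RR^3)$ and an arbitrary $\eps > 0$ and produce a measurable $U\subset M_\infty$ with $0 < \mu(U)\le\tfrac12\mu(M_\infty)$ and $\mu\bigl(\bigcup_{s\in S} s.U\cap M_\infty\bigr)\le\mu(U)$; since $\mu(U) < (1+\eps)\mu(U)$, such a $U$ defeats the pair $(S,\eps)$, and as $(S,\eps)$ is arbitrary this means $M_\infty$ is not a domain of expansion. To build $U$: set $T := S\setminus\{\Id\}$, a finite subset of $\Iso(\RR^3)\setminus\{\Id\}$, and apply Lemma~\ref{lem-key} with $A := K_0$ (a dyadic complex with $K_0\cap M_\infty = M_\infty\ne^*\emptyset$). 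This gives a dyadic cube $K\subset K_0$ and $j\in\NN$ with $K\cap M_\infty\ne^*\emptyset$ and $M_j\cap\de(K) = \emptyset$ for all $\de\in T$. Since $\mu(K\cap M_\infty) > 0$, partitioning $K$ into dyadic subcubes and using additivity of measure yields, for $i$ large enough, a dyadic $i$-subcube $K'\subset K$ with $\mu(K'\cap M_\infty) > 0$ and $\mu(K') = 8^{-i}\le\tfrac12\mu(M_\infty)$; since $\de(K')\subset\de(K)$ and $M_\infty\subset M_j$, we still have $M_\infty\cap\de(K') = \emptyset$ for every $\de\in T$. Finally set $U := K'\cap M_\infty$: it is Borel, $U\subset M_\infty$, and $0 < \mu(U)\le\mu(K')\le\tfrac12\mu(M_\infty)$. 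For $\de\in T$ we have $\de.U\subset\de(K')$, so $\de.U\cap M_\infty = \emptyset$, while $\Id.U\cap M_\infty = U$ because $U\subset M_\infty$. Hence $\bigcup_{s\in S} s.U\cap M_\infty\subset U$ and $\mu\bigl(\bigcup_{s\in S} s.U\cap M_\infty\bigr)\le\mu(U)$, as required. (Since $\cal R\ne\emptyset$, this also yields Theorem~\ref{intro-thm2}.)

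I do not expect any real obstacle in this final step: all the substance lies upstream, in Theorem~\ref{thm-subcongruent} (the entropy/counting estimate ensuring $\cal R$ has full measure) and in Lemma~\ref{lem-key} (the induction on $|T|$ that removes one isometry at a time). The only minor points to be careful about are shrinking $K$ to a dyadic subcube $K'$ small enough that $\mu(U)\le\tfrac12\mu(M_\infty)$, and observing that the identity, if it lies in $S$, contributes only $U$ itself to the union and hence causes no expansion.
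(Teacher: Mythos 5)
Your proposal is correct and follows essentially the same route as the paper: apply Lemma~\ref{lem-key} with $A=K_0$ and $T=S\setminus\{\Id\}$, shrink the resulting cube so that $U=K'\cap M_\infty$ has measure at most $\tfrac12\mu(M_\infty)$, and observe that only the identity can contribute to $\bigcup_{s\in S}s.U\cap M_\infty$. Your explicit dyadic-subcube argument just fills in the paper's parenthetical ``replace $K$ by a suitably chosen subset.''
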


\begin{proof}
The statement about positive measure is Proposition~\ref{prop-entropy}(a).

For the sake of contradiction, let us assume that $(M_i)\in\cal R$ is such that $M_\infty$ is a domain of expansion. Let us fix a finite set $S \subset \Iso(\RR^3)$ and $\eps>0$. To obtain a contradiction we have to find a measurable set $U\subset M_\infty$ with $0<\mu(U)\le \frac 12 \mu(M_\infty)$ and 
\begin{equation}\label{eq-not-domofexp}
\mu(M_\infty \cap \bigcup_{\de\in S} \de(U)) \le \mu(U)(1+\eps). 
\end{equation}

The application of Lemma \ref{lem-key} for $(M_i)$, $A=K_0$, and $T=S\setminus \{\Id\}$ gives us a dyadic cube $K$ and an integer $i$ such that $K \cap M_\infty \ne^*\emptyset$ and $M_i \cap \de(K) =\emptyset$ for all $\de\in S \setminus \{\Id\}$. We may assume that $\mu(K \cap M_\infty)\le\frac 12 \mu(M_\infty)$ (if this is not the case, then we can replace $K$ by a suitably chosen subset). Let $U=K\cap M_\infty$. Since $M_\infty \subset M_i$ and $U \subset K$, it follows that $M_\infty \cap \de(U) = \emptyset$ for all $\de\in S \setminus \{\Id\}$. As a consequence, the set $M_\infty \cap \bigcup_{\de\in S} \de(U)$ is either empty (if $\Id\notin S$) or equals $U$ (otherwise). In both cases the inequality~\eqref{eq-not-domofexp} is clearly satisfied, which finishes the proof.
\end{proof}

We now proceed to the proof of Theorem~\ref{intro-thm1}. The idea is to consider sets of the form $X=C\sqcup C\sqcup D$ and $Y=C \sqcup D \sqcup D$ where $C$ and $D$ are independent copies of the random set $\KK_\infty$, and prove that in any equidecomposition between $X$ and $Y$ there exists an element $x$ of the two copies of $C$ in $X$ which has to be mapped into the same element in $Y$, which is impossible in an equidecomposition. 

We start with the following standard lemma.

\begin{lemma}\label{lem-ess-reduction}
Suppose that $X,Y\subset \RR^3$ are measurable sets which are essentially equidecomposable. Then there exist measurable sets  $X'\subset X$, $Y'\subset Y$ which are equidecomposable and such that $\mu(X\setminus X') = \mu(Y \setminus Y')= 0$.
\end{lemma}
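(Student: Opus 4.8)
The statement to be proved is the following standard reduction lemma: if $X,Y\subset\RR^3$ are measurable and \emph{essentially} equidecomposable, then one can find \emph{genuine} subsets $X'\subset X$, $Y'\subset Y$ which are equidecomposable and differ from $X$, $Y$ respectively by null sets. The plan is to unwind the definition of essential equidecomposability and then absorb the null discrepancies into the pieces of the decomposition.

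\textbf{Step 1: Unwind the definition.} By hypothesis there are measurable sets $A$, $B$ with $\mu(X\triangle A)=\mu(Y\triangle B)=0$, and a genuine equidecomposition $A=\bigsqcup_{i=1}^k A_i$, $B=\bigsqcup_{i=1}^k B_i$ with $B_i=\ga_i(A_i)$, $\ga_i\in\Iso(\RR^3)$. Set $N_X := X\triangle A$ and $N_Y := Y\triangle B$; these are null.

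\textbf{Step 2: Remove from $A$ everything that is not in $X$ or whose image is not in $Y$.} Define the ``bad'' null set inside $A$ by
\[
  Z := (A\setminus X)\ \cup\ \bigcup_{i=1}^k\bigl(A_i\cap \ga_i^{-1}(B\setminus Y)\bigr).
\]
Since isometries preserve measure and $\mu(A\setminus X)\le\mu(N_X)=0$, $\mu(B\setminus Y)\le\mu(N_Y)=0$, we get $\mu(Z)=0$. Now put $X' := A\setminus Z$ and $A_i' := A_i\setminus Z$, $B_i' := \ga_i(A_i')$, and $Y' := \bigsqcup_{i=1}^k B_i'$. Then $X'=\bigsqcup_i A_i'$ and $Y'=\bigsqcup_i B_i'$ is a genuine equidecomposition, $X'\subset X$ by construction, and $B_i'=\ga_i(A_i')\subset \ga_i(A_i\setminus \ga_i^{-1}(B\setminus Y))\subset B\setminus(B\setminus Y)\subset Y$, so $Y'\subset Y$.

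\textbf{Step 3: Check the symmetric differences are null.} We have $\mu(X\setminus X')\le \mu(X\setminus A)+\mu(Z)\le\mu(N_X)+0=0$ and $X'\subset X$, so $\mu(X\triangle X')=0$; in particular $\mu(X\setminus X')=0$. For $Y$: $Y' = B\setminus\bigcup_i\ga_i(Z\cap A_i)$ up to the reindexing, so $\mu(B\setminus Y')\le\sum_i\mu(\ga_i(Z\cap A_i))=0$, hence $\mu(Y\setminus Y')\le\mu(Y\setminus B)+\mu(B\setminus Y')\le\mu(N_Y)+0=0$, and since also $Y'\subset Y$ we conclude $\mu(Y\setminus Y')=0$ as claimed.

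\textbf{Main obstacle.} There is essentially no hard step here — the lemma is bookkeeping. The only point requiring a little care is making sure the set $Z$ that we delete from $A$ simultaneously fixes \emph{both} containments ($X'\subset X$ and $Y'\subset Y$): deleting $A\setminus X$ handles the first, but one must also delete the preimages under the $\ga_i$ of $B\setminus Y$ so that the images $B_i'$ land inside $Y$, and one must verify these deletions are still null (which holds because isometries are measure-preserving). One should also note that the pieces $A_i'$ remain pairwise disjoint and the $B_i'$ remain pairwise disjoint, which is automatic since we only removed points, and that $X'$, $Y'$ are measurable, being Boolean combinations of measurable sets and images of measurable sets under isometries.
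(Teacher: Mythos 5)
Your proof is correct, and it takes a somewhat different route from the paper's. The paper fixes the same equidecomposition $X_0=\bigsqcup A_i$, $Y_0=\bigsqcup B_i$ with $B_i=\ga_i(A_i)$, but then forms the group $\Ga$ generated by $\ga_1,\dots,\ga_n$ and removes the $\Ga$-\emph{invariant} null set $T=\bigcup_{\ga\in\Ga}\ga\bigl((X_0\setminus X)\cup(Y_0\setminus Y)\bigr)$ from both sides; invariance gives $\ga_i(A_i\setminus T)=B_i\setminus T$, so both $X'$ and $Y'$ are obtained symmetrically as ``original set minus $T$.'' You instead delete only the finitely many null sets needed ($A\setminus X$ and the $\ga_i$-preimages of $B\setminus Y$) and then \emph{define} $Y'$ as the forward image $\bigsqcup\ga_i(A_i')$ rather than as $Y$ minus a prescribed set; this avoids the group saturation entirely and removes a smaller set, at the cost of an asymmetric definition of $Y'$. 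Both arguments are complete. One small point worth making explicit in your version: the pieces $A_i$ in an equidecomposition are not assumed measurable, so $A_i'=A_i\setminus Z$ need not be measurable either; the measurability of $Y'=B\setminus\bigsqcup_i\ga_i(A_i\cap Z)$ follows not from it being a Boolean combination of measurable sets but from completeness of Lebesgue measure, since each $\ga_i(A_i\cap Z)$ is a subset of the null set $\ga_i(Z)$. (The paper's invariant-set formulation sidesteps this because its $Y'$ is $Y$ minus a countable union of null sets.)
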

\begin{proof}

Since $X$ and $Y$ are assumed to be essentially equidecomposable, there 
exist measurable sets $X_0$ and $Y_0$ which are equidecomposable and 
such that $\mu(X\triangle X_0) = \mu(Y\triangle Y_0 ) = 0$. Let us 
define $X_1  := X \cap X_0$, $Y_1 := Y \cap Y_0$, and let $R_0 := 
X_0\setminus X_1$, $S_0 := Y_0\setminus Y_1$. 

Finally suppose that $X_0 = \bigsqcup_{i=1}^n A_i$,  $Y_0= \bigsqcup_{i=1}^n B_i$, where $B_i = \ga_i(A_i)$ for some $\ga_i \in \Iso(\RR^3)$. Let $\Ga$ be the group generated by $\ga_1,\ldots, \ga_n$. 

Let $T = \bigcup_{\ga \in \Ga} \ga(R_0\cup S_0)$. Since $\mu(R_0) = \mu(S_0) = 0$, we also have $\mu(T) =0$. Let $X' := X\setminus T$, $Y' := Y\setminus T$. Since $T$ is $\Ga$-invariant, we have 
$$
    \ga_i(A_i\setminus T ) = B_i\setminus T
$$
for all $i$, and hence  $X'=\bigsqcup_{i=1}^n (A_i\setminus T)$ and $Y'= \bigsqcup_{i=1}^n (B_i\setminus T)$ is an equidecomposition between $X'$ and $Y'$. This finishes the proof.

\end{proof}

%~ \begin{theorem}
%~ There exist pre-equidecomposable compact sets $X, Y\subset \RR^3$ such that there exist no sets $X', Y'$ with $X'=^*X$, $Y'=^*Y$ which are equidecomposable. In particular, the sets $X, Y$ are not measurably equidecomposable.
%~ \end{theorem}
We are now ready to prove Theorem~\ref{intro-thm1}.

\begin{proof}[Proof of Theorem~\ref{intro-thm1}]
Let us fix $(M_i)\in\cal R$. Then some $M_j$ contains two disjoint $S_j$-cubes, call them $A$ and $B$. Let $\al\in\Iso(\RR^3)$ be the translation by the vector $(1,0,0)$. Define $C=A\cap M_\infty$, $D=B\cap M_\infty$, $X=C \cup D \cup \al(C)$, and $Y=C \cup D \cup \al(D)$. 

Clearly $X$ and $Y$ are both compact sets of the same positive measure, and we have that $X$ and $Y$ cover each other.  Thus, to finish the proof we need to show that $X$ and $Y$  are not essentially equidecomposable.

For the sake of contradiction, suppose that $X$ and $Y$ are essentially equidecomposable.  By Lemma~\ref{lem-ess-reduction}, we can find measurable sets  $X'\subset X$ and $Y'\subset Y$ which are equidecomposable and such that  $\mu(X\setminus X') = \mu(Y \setminus Y')= 0$. Let us write $X' = \bigsqcup_{i=1}^n X_i$,   $Y' = \bigsqcup_{i=1}^n Y_i$, where for all $i$ we have $Y_i = \ga_iX_i$ for some $\ga_i\in \Iso(\RR^3)$. Without loss of generality we can assume that $X_i\neq \emptyset$ for all $i$. 

Let 
$$
    T=\bigcup_{i=1}^n \{\ga_i, \al^{-1}\ga_i, \al^{-1}\ga_i\al,\ga_i\al\}.
$$

We apply Lemma~\ref{lem-key} for $(M_i)$, $T\setminus\{\Id\}$ and $A$ 
to obtain a cube $K\subset A$ and $k\in \NN$ such that $\mu(K\cap 
M_\infty)>0$ and $M_\infty \cap \de(K) = \emptyset$ for all $\de\in 
T\setminus\{\Id\}$. 

By subdividing the sets $X_i$ if necessary, we may assume that for all $i$ we have either $K\cap X_i =\emptyset$ or $\al(K)\cap X_i=\emptyset$. This does not lead to any circularity in the definition of $K$ since $K$ depends only on the set $T$, and the set $T$ does not change after subdividing the sets $X_i$.

With this in mind, we note that the sets 
$$
    K\cap M_\infty \cap \bigcup_{\substack{1\le i,j\le n\\i\neq j}} (X_i \cap \al^{-1}(X_j))
$$ 
and $K\cap M_\infty$ have the same positive measure.   It follows that for some $\ell$ and $m$  with $\ell\neq m$ we have  $X_\ell \cap \al^{-1}(X_m) \cap K\cap M_\infty \neq \emptyset$. Thus there exists $z\in K\cap M_\infty$ such that $z\in X_\ell$ and $\al(z) \in X_m$. 

To obtain the desired contradiction, we shall now prove that $\ga_\ell(z)=z=\ga_m(\al(z))$.

\begin{claim} We have $\ga_\ell = \Id$.
\end{claim}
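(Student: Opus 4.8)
The plan is to show that the isometry $\ga_\ell$ sending $X_\ell$ to $Y_\ell$ must fix the point $z$, by tracking where $z$ and its neighbors can go.

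First I would recall the geometric setup: $z \in K \cap M_\infty$, $z \in X_\ell \subset X = C \cup D \cup \al(C)$, so $\ga_\ell(z) \in Y_\ell \subset Y = C \cup D \cup \al(D)$. The key structural fact is that $K \subset A$, and by our choice of $K$ via Lemma~\ref{lem-key}, for every $\de \in T \setminus \{\Id\}$ we have $M_\infty \cap \de(K) = \emptyset$; in particular $\ga_\ell(K) \cap M_\infty = \emptyset$ unless $\ga_\ell = \Id$, since $\ga_\ell \in T$. Now $z \in K \cap M_\infty$, so $\ga_\ell(z) \in \ga_\ell(K)$. If $\ga_\ell \neq \Id$, then $\ga_\ell(K) \cap M_\infty = \emptyset$, so $\ga_\ell(z) \notin M_\infty$. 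But $\ga_\ell(z) \in Y_\ell \subset Y$. The plan is to derive a contradiction from $\ga_\ell(z) \in Y \setminus M_\infty$, i.e. to argue that such a point essentially cannot be in $Y_\ell$ — but wait, $Y \setminus M_\infty$ can be nonempty (it contains $\al(D) \setminus M_\infty$ and parts of $C, D$ outside $M_\infty$ — actually $C, D \subset M_\infty$ so only $\al(D)$ matters). So I must be more careful: I would use that $z \in M_\infty$ together with the fact that equidecomposition respects the measure-theoretic structure, and push the argument to the level of positive-measure sets rather than the single point $z$.

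The cleaner approach: rather than arguing about the single point $z$, I would revisit the set $K \cap M_\infty \cap \bigcup_{i \neq j}(X_i \cap \al^{-1}(X_j))$ which has full measure in $K \cap M_\infty$, hence positive measure, and is contained in $X_\ell \cap \al^{-1}(X_m) \cap K \cap M_\infty$ for a fixed pair $\ell \neq m$ after passing to a positive-measure piece. For a positive-measure set of $z$ in this piece we have $\ga_\ell(z) \in Y_\ell$. Since $\ga_\ell \in T$ and $\ga_\ell(K) \supset \ga_\ell(\text{positive-measure subset of } K \cap M_\infty)$, if $\ga_\ell \neq \Id$ then $\ga_\ell(K) \cap M_\infty = \emptyset$, forcing $\ga_\ell(z) \notin M_\infty$ for all these $z$; but then $\ga_\ell$ maps a positive-measure subset of $M_\infty$ into $Y \setminus M_\infty \subset \al(D)$ (up to measure zero, since $C \cup D \subset^* M_\infty$). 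This is not yet absurd on its own. The actual contradiction I expect the authors to exploit is that $z \in K \subset A \subset M_\infty$, so $z \notin \al(C)$ (as $\al(C) \subset \al(A) $ is disjoint from $A$ once $\al$ translates by $(1,0,0)$ and $A$ is a subcube of the unit cube), hence $z \in X_\ell$ means $z \in (C \cup D) \cap X_\ell$, and symmetrically $\al(z) \in \al(C \cup D)$, with $\al(z) \in X_m$ forcing $\al(z) \in \al(C)$, i.e. $z \in C$. So $z \in C \cap K$, and we want $\ga_\ell(z) \in Y \cap M_\infty = C \cup D$, which combined with $\ga_\ell(K) \cap M_\infty = \emptyset$ when $\ga_\ell \neq \Id$ gives the contradiction: a positive-measure set of points of $M_\infty$ would be sent outside $M_\infty$ yet land in $Y_\ell \cap M_\infty$. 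Therefore $\ga_\ell = \Id$.

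The main obstacle I anticipate is bookkeeping the "essentially" qualifiers cleanly: $X', Y'$ agree with $X, Y$ only up to measure zero, $C, D \subset M_\infty$ while $\al(C), \al(D)$ sit outside $M_\infty$, and one must ensure that after discarding null sets the point $z$ (or the positive-measure family of such points) still satisfies all the membership relations simultaneously — $z \in K \cap M_\infty$, $z \in X_\ell$, $\al(z) \in X_m$, and $\ga_\ell(z) \in Y_\ell \cap M_\infty$. Once these are pinned down, the contradiction with $M_\infty \cap \ga_\ell(K) = \emptyset$ (from Lemma~\ref{lem-key}, valid since $\ga_\ell \in T$) is immediate, yielding $\ga_\ell = \Id$.
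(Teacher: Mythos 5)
There is a genuine gap. Your argument correctly gets as far as: if $\ga_\ell\neq\Id$ then $\ga_\ell\in T\setminus\{\Id\}$, so $\ga_\ell(K)\cap M_\infty=\emptyset$, hence $\ga_\ell(z)\notin M_\infty$ and in particular $\ga_\ell(z)\notin C\cup D$. But the contradiction you then claim rests on the assertion that $\ga_\ell(z)\in Y\cap M_\infty$, which you never justify --- and indeed you flagged the problem yourself earlier ($Y\setminus M_\infty$ contains $\al(D)$, which is disjoint from $M_\infty$) and then dropped it. All you actually know is $\ga_\ell(z)\in Y_\ell\subset Y=C\cup D\cup\al(D)$, so you still must exclude $\ga_\ell(z)\in\al(D)$. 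Nothing in your write-up does this; passing to a positive-measure family of points $z$ does not help, since an isometry can perfectly well carry a positive-measure subset of $M_\infty$ into $\al(D)$ (the translation $\al$ itself does exactly that to $D$). The switch to positive-measure sets is also unnecessary: Lemma~\ref{lem-ess-reduction} already produced a genuine (not a.e.) equidecomposition of $X'\subset X$ with $Y'\subset Y$, and the relevant intersections $M_\infty\cap\de(K)$ are literally empty, so the pointwise argument at $z$ is sound.

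The missing idea --- and the reason the set $T$ was defined to contain $\al^{-1}\ga_i$ and not just $\ga_i$ --- is the following. First one shows $\ga_\ell\neq\al$: since $z\in K\cap M_\infty\subset A\cap M_\infty=C$, the point $\al(z)$ lies in $\al(C)$, which is disjoint from $Y$, whereas $\ga_\ell(z)\in Y$. Hence $\al^{-1}\ga_\ell\in T\setminus\{\Id\}$, so by the choice of $K$ we get $\al^{-1}\ga_\ell(K)\cap M_\infty=\emptyset$; since $D\subset M_\infty$ this gives $\al^{-1}\ga_\ell(z)\notin D$, i.e.\ $\ga_\ell(z)\notin\al(D)$. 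Combined with $\ga_\ell(z)\notin C\cup D$ this yields $\ga_\ell(z)\notin Y$, the desired contradiction. (A minor slip: you write $K\subset A\subset M_\infty$; in fact $A$ is an $S_j$-cube of $M_j$, not a subset of $M_\infty$, though the inclusion $z\in K\cap M_\infty\subset C$ you need is still valid.)
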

\begin{proof}[Proof of Claim]
By way of contradiction, let us assume that  $\ga_\ell \neq \Id$. Then by the choice of $K$ we have 
 $\ga_\ell(K)\cap M_\infty =\emptyset$.  Since $z\in K$ and $C\cup D \subset M_\infty$, it follows that $\ga_\ell(z) \notin  C\cup D$.

Since $z\in C$, we have $\al(z)\notin  Y$, and hence we deduce that $\ga_\ell \neq \al$, 
and so $\al^{-1}\ga_\ell \neq \Id$. By the choice of $K$, we have that 
$\al^{-1}\ga_\ell(K) \cap M_\infty = \emptyset$. Since $z\in K$ and 
$D\subset M_\infty$, it follows that $\al^{-1}\ga_\ell(z)\notin D$, hence $\ga_\ell(z)\notin \al(D)$.

All in all, we have that  $\ga_\ell(z)\notin C \cup D \cup \al(D) = Y$, which is impossible as $\ga_\ell(z) \in \ga_\ell (X_\ell) \subset Y$. This contradiction shows that $\ga_\ell=\Id$, which finishes the proof of the claim.
\end{proof}

\begin{claim} We have $\ga_m\al =\Id$.
\end{claim}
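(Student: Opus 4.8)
The plan is to argue exactly as in the proof of the preceding claim, but now tracking the isometries $\ga_m\al$ and $\al^{-1}\ga_m\al$, both of which lie in $T$ by construction (take $i=m$ in the definition of $T$). So I would assume for contradiction that $\ga_m\al\neq\Id$ and derive that $\ga_m(\al(z))\notin Y$, which contradicts the fact that $\al(z)\in X_m$ and hence $\ga_m(\al(z))\in\ga_m(X_m)=Y_m\subset Y$.

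First, since $\ga_m\al\in T\setminus\{\Id\}$, the defining property of $K$ gives $\ga_m\al(K)\cap M_\infty=\emptyset$; as $z\in K$ this forces $\ga_m\al(z)\notin M_\infty$, and in particular $\ga_m\al(z)\notin C\cup D$ because $C\cup D\subset M_\infty$. It then remains to show $\ga_m\al(z)\notin\al(D)$, i.e.\ $\al^{-1}\ga_m\al(z)\notin D$. Here I would distinguish two cases. If $\al^{-1}\ga_m\al\neq\Id$, then since $\al^{-1}\ga_m\al\in T$ the choice of $K$ again yields $\al^{-1}\ga_m\al(K)\cap M_\infty=\emptyset$, and as $D\subset M_\infty$ we get $\al^{-1}\ga_m\al(z)\notin D$. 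If instead $\al^{-1}\ga_m\al=\Id$, i.e.\ $\ga_m=\Id$, then $\al^{-1}\ga_m\al(z)=z\in C$, and $z\notin D$ since $A$ and $B$ are disjoint so that $C\cap D=\emptyset$. Either way $\ga_m\al(z)\notin\al(D)$, whence $\ga_m\al(z)\notin C\cup D\cup\al(D)=Y$, the desired contradiction.

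The only step that requires a little care is the case split on $\al^{-1}\ga_m\al$: one must keep in mind that $\ga_m=\Id$ has not yet been excluded, and dispatch that subcase using the disjointness of $C$ and $D$ rather than the choice of $K$; everything else is a direct transcription of the previous argument with the roles of $C$ and $D$ interchanged. Once this claim is established, combining it with the previous claim gives $\ga_\ell(z)=z=\ga_m(\al(z))$, so $z\in Y_\ell\cap Y_m$ with $\ell\neq m$, contradicting the pairwise disjointness of the $Y_i$; this completes the proof that $X$ and $Y$ are not essentially equidecomposable, and hence of Theorem~\ref{intro-thm1}.
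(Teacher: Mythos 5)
Your proof is correct and follows essentially the same route as the paper: use $\ga_m\al,\al^{-1}\ga_m\al\in T$ and the choice of $K$ to rule out $\ga_m\al(z)$ lying in $C\cup D$ and in $\al(D)$, contradicting $\ga_m\al(z)\in Y_m\subset Y$. The only (harmless) difference is that you dispatch the subcase $\ga_m=\Id$ directly via $z\in C$ and $C\cap D=\emptyset$, whereas the paper deduces $\ga_m\neq\Id$ from $\ga_m\al(z)\in Y$ before invoking the choice of $K$ for $\al^{-1}\ga_m\al$.
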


\begin{proof}[Proof of Claim]
By way of contradiction, let us assume $\ga_m\al\neq \Id$. Then by the choice of $K$ we have  $\ga_m\al(K)\cap M_\infty = \emptyset$.  Since $z\in K$ and $C\cup D \subset M_\infty$, it follows that $\ga_m\al(z)\notin C\cup D$. 

Since $\ga_m\al(z) \in Y$, we deduce that $\ga_m\neq \Id$, and so $\al^{-1}\ga_m\al\neq \Id$. By the choice of $K$, we have that $\al^{-1}\ga_m\al(K) \cap M_\infty = \emptyset$. Since $z\in K$ and $D\subset M_\infty$, it follows that $\al^{-1}\ga_m\al(z)\notin D$, hence $\ga_m\al(z)\notin \al(D)$.

All in all, we have that $\ga_m\al(z) \notin C \cup D \cup \al(D) = Y$, which is impossible as $\ga_m\al(z)\in  \ga_m(X_m) \subset Y$. This contradiction shows that $\ga_m\al=\Id$, which finishes the proof of the claim.
\end{proof}

Thus we have  $\ga_\ell(z)=z=\ga_m(\al(z))$ and so the sets $Y_\ell=\ga_\ell(X_\ell)$ and $Y_m=\ga_m(X_m)$ are not disjoint. This contradicts the definition of $Y_\ell$ and $Y_m$, and shows that $X$ and $Y$ are not essentially equidecomposable. This finishes the proof.
\end{proof}

\begin{remark}\label{rem-tau-essential}
Let us explain the necessary changes in the proof of Theorem~\ref{intro-thm1} to show that the sets $X$ and $Y$ defined in that proof are not $\tau$-essentially equidecomposable, as defined in Remark~\ref{intro-rem-tau}. First, by imitating Lemma~\ref{lem-ess-reduction}, we can find subsets $X'$ and $Y'$ which are equidecomposable and such that $X\setminus X'$ and $Y\setminus Y'$ are of first category in, respectively, $X$ and $Y$. 

The only other change needed in the proof is arguing why the set
\begin{equation}\label{eq-33}
    K\cap M_\infty \cap \bigcup_{\substack{1\le i,j\le n\\i\neq j}} (X_i \cap \al^{-1}(X_j))
\end{equation}
is non-empty. We have that the complement of the set
$$
X\cap \bigcup_{\substack{1\le i,j\le n\\i\neq j}} (X_i \cap \al^{-1}(X_j))
$$
in the compact set $X = C \cup D \cup \al(C)$ is of first category. On the other hand, the topological interior of $K\cap M_\infty$ in $X$ is non-empty. Since $X$ is in particular a Baire space, we deduce that~\eqref{eq-33} is indeed non-empty.
\end{remark}

\bibliography{bibliografia.bib}

\end{document}